\theoremstyle{plain}
\newtheorem{theorem}{Theorem}[section]
\newtheorem{lemma}{Lemma}[section]
\begin{document}

\baselineskip=17pt

\subjclass[2020]{11B68, 11D41}
\keywords{Diophantine equations, exponential equations, Bernoulli polynomials}

\title[On equal values of products and power sums...]{On equal values of products and power sums of consecutive elements in an arithmetic progression}

\author[A. Bazs\'o, D. Kreso, F. Luca and \'A. Pint\'er]{A. Bazs\'o, D. Kreso, F. Luca, \'A. Pint\'er, and Cs. Rakaczki}

\address{A. Bazs\'o \newline
\indent Institute of Mathematics \newline
\indent University of Debrecen \newline
\indent P.O. Box 400, H-4002 Debrecen, Hungary \newline
\indent and \newline
\indent ELKH-DE Equations, Functions, Curves and their Applications Research Group}
\email{bazsoa@science.unideb.hu}

\address{D. Kreso \newline
\indent Institute f\"ur Mathematik \newline
\indent Technische Universit\"at Graz \newline
\indent Steyrergasse 30, 8010 Graz Austria}
\email{kreso@math.tugraz.at}

\address{F. Luca \newline
\indent School of Mathematics \newline
\indent Wits University \newline
\indent 1 Jan Smuts, Brammfontein, 2000 Johannesburg, South Africa, \newline
\indent Research Group in Algebraic Structures and Applications \newline
\indent King Abdulaziz University \newline
\indent Abdulah Sulayman, Jeddah 22254, Saudi Arabia, \newline
\indent and \newline
\indent Mathematical Institute \newline
\indent UNAM Ap. Postal 61-3 (Xangari) \newline
\indent CP 58 089. Morelia, Michoac\'an, Mexico}
\email{florian.luca@wits.ac.za}

\address{\'A. Pint\'er \newline
\indent Institute of Mathematics \newline
\indent U
niversity of Debrecen \newline
\indent P.O. Box 400, H-4002 Debrecen, Hungary}
\email{apinter@science.unideb.hu}

\address{Cs. Rakaczki\newline
\indent Institute of Mathematics\newline
\indent University of Miskolc \newline
\indent H-3515 Miskolc Campus, Hungary}
\email{ matrcs@uni-miskolc.hu}

\thanks{}

\date{}

\begin{abstract}
In this paper we study the Diophantine equation
\begin{align*}
b^k + \left(a+b\right)^k + &\left(2a+b\right)^k + \ldots + \left(a\left(x-1\right) + b\right)^k = \\
&y\left(y+c\right) \left(y+2c\right) \ldots \left(y+ \left(\ell-1\right)c\right),
\end{align*}
where $a,b,c,k,\ell$ are given  integers under natural conditions. We prove some effective results for special values for $c,k$ and $\ell$ and obtain a general ineffective result based on Bilu-Tichy method.
\end{abstract}

\maketitle

\section{Introduction}

The polynomials
\begin{equation}
S_{a,b}^k \left(x\right) = b^k + \left(a+b\right)^k + \left(2a+b\right)^k + \ldots + \left(a\left(x-1\right) + b\right)^k \label{pol:skabx}
\end{equation}
and
\begin{equation}
R_c^{\ell} \left(x\right) = x\left(x+c\right) \left(x+2c\right) \ldots \left(x+ \left(\ell-1\right)c\right),
\end{equation}
are natural generalizations of the widely studied polynomials $S_k (x) = S_{1,0}^k (x)$ and $R_{\ell} (x) =R_1 ^{\ell} (x)$, respectively.
Various Diophantine equations concerning $R_{\ell} (x)$ and $S_k (x)$ have been extensively investigated. See e.g. \cite{BBKPT} and the references given there.

It is easy to see that involving Bernoulli polynomials, the polynomial defined above by \eqref{pol:skabx} can be rewritten as
\begin{equation} \label{eq:mainI}
S_{a,b}^k \left(x\right) =  \frac{a^k}{k+1} \left(B_{k+1} \left(x+ \frac{b}{a}\right) - B_{k+1} \left(\frac{b}{a}\right)\right).
\end{equation}

In \cite{BBKPT}, Bilu, Brindza, Kirschenhofer, Pint\'er and Tichy proved that for $k \geq 1, \ell \geq 2$, and $(k,\ell) \neq (1,2)$, the equation $S_k (x) = R_{\ell} (y)$ has at most finitely many integer solutions. They also proved a similar result for the equation $S_k (x) = S_{\ell} (y)$. Both of these results were ineffective, since their proofs were mainly based on the general finiteness criterion of Bilu and Tichy \cite{BiluTichy} for Diophantine equations of the form $f(x) = g(y)$. In certain special cases they also proved effective finiteness results for the corresponding equations.

In our earlier paper \cite{BKLP}, using a slightly modified approach, we generalized the above result of Bilu, Brindza, Kirschenhofer, Pint\'er and Tichy \cite{BBKPT} concerning the equation $S_k (x) = S_{\ell} (y)$ by proving that the more general equation $S_{a,b}^k (x) = S_{c,d}^{\ell} (y)$ has at most finitely many solutions in rational integers $x,y$. This theorem is also ineffective, but it is made effective in some special cases.

The purpose of this paper is to study the equation
\begin{equation} \label{eq:RS}
S_{a,b}^k (x) = R_{c}^{\ell} (y)
\end{equation}
in integers $x,y$.

As first result we prove a generalization of original Sch\"affer problem on the power values of power sums.

\begin{theorem} \label{thm:eff1}
Let $a,b$ be rational integers with $a>0$ and suppose that $c=0$. We consider equation
\begin{equation}\label{eq:c=0}
S_{a,b}^k (x) =y^{\ell}
\end{equation}
in integers $x,y>1$ and $\ell\geq 2$. If $(k,a,b)\neq (1,2,1 )$ then $\ell<C_1$ where $C_1$ is an effectively computable constant depending only on $k,a$ and $b$. Further, apart from the cases
$$(k,\ell,a,b)\in\{ (1,2,a,0),(3,2,a,0),(3,2,2,1),(3,4,a,0), (5,2,a,0) \}$$
equation (\ref{eq:c=0}) implies $\max(|x|,|y|)<C_2$ where $C_2$ is an effectively computable constant depending on $k,\ell,a$ and $b$.
\end{theorem}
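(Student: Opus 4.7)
The approach is to convert \eqref{eq:c=0} into a superelliptic equation and apply the effective theorem of Brindza. By the Bernoulli representation \eqref{eq:mainI}, \eqref{eq:c=0} is
\begin{equation*}
\frac{a^k}{k+1}\bigl(B_{k+1}(x+b/a) - B_{k+1}(b/a)\bigr) = y^{\ell},
\end{equation*}
which after clearing denominators and substituting $X = ax + b$ takes the shape $G(X) = D z^{\ell}$ with $G \in \mathbb{Z}[X]$ monic of degree $k+1$, $D$ a fixed integer, and $z$ linearly related to $y$. Everything now hinges on the multiplicity pattern of the zeros of the shifted Bernoulli polynomial $B_{k+1}(x+b/a) - B_{k+1}(b/a)$.

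\textbf{Key steps.} First I would bound $\ell$: Brindza's effective version of LeVeque's theorem asserts that $\ell$ is bounded in terms of the coefficients of $G$ provided $G$ has at least two zeros whose multiplicities are coprime to $\ell$ and $G$ is not itself a constant times a perfect $\ell$-th power. The required structural information on $B_{k+1}(x+b/a)-B_{k+1}(b/a)$ is supplied by the theorems of Pint\'er, Rakaczki and Brindza on the zeros of shifted Bernoulli polynomials; these apply except precisely when $(k,a,b) = (1,2,1)$, the identity $S_{2,1}^1(x) = x^2$ trivially producing solutions of \eqref{eq:c=0} with every $\ell \geq 2$. This yields $\ell < C_1(k,a,b)$. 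Second, for each of the finitely many surviving exponents $\ell$, the same effective theorem furnishes $\max(|X|,|z|) < C_2$, and hence $\max(|x|,|y|)<C_2'$, whenever $G$ has three zeros of odd multiplicity (for $\ell = 2$) or two zeros of multiplicity coprime to $\ell$ (for $\ell \geq 3$). The listed exceptions are exactly the tuples for which this hypothesis fails: $S_{a,0}^1(x) = \tfrac{a}{2}x(x-1)$ and $S_{a,0}^3(x) = \tfrac{a^3}{4}x^2(x-1)^2$ account for $(1,2,a,0)$, $(3,2,a,0)$ and $(3,4,a,0)$; the Faulhaber identity $S_{a,0}^5(x) = \tfrac{a^5}{12}x^2(x-1)^2(2x^2-2x-1)$ reduces $(5,2,a,0)$ to a Pell equation; and $S_{2,1}^3(x) = x^2(2x^2-1)$ does the same for $(3,2,2,1)$.

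\textbf{Main obstacle.} The decisive technical issue is the structural classification: one must verify that, outside the listed tuples, the polynomial $G$ genuinely satisfies Brindza's multiplicity hypothesis, ruling out both spurious factorisations $G = D\, h(X)^{\ell}$ and degenerate multiplicity patterns. The available descriptions of the factorisation of $B_n(x) - B_n(\alpha)$ and of its multiple zeros suffice in principle, but the analysis must be performed twice --- first uniformly in $\ell$ to yield $C_1$, then for each small $\ell$ individually to yield $C_2$ --- and for the exceptional small values of $k$ the delicate part lies in confirming that no further identities, beyond those tabulated above, force the polynomial into the degenerate class.
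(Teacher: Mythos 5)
Your overall strategy is the same as the paper's: rewrite \eqref{eq:c=0} via \eqref{eq:mainI}, bound the exponent and then the solutions by the effective theory of superelliptic equations (the paper packages Schinzel--Tijdeman and Brindza as Lemma \ref{lem:hyper}), and feed it the zero structure of $B_{k+1}(x+b/a)-B_{k+1}(b/a)$ coming from the Brillhart/Pint\'er--Rakaczki results (Lemma \ref{eff:1}); your explicit identities $S_{2,1}^1(x)=x^2$, $S_{a,0}^3(x)=\tfrac{a^3}{4}x^2(x-1)^2$, $S_{a,0}^5(x)=\tfrac{a^5}{12}x^2(x-1)^2(2x^2-2x-1)$, $S_{2,1}^3(x)=x^2(2x^2-1)$ are correct. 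One slip in the setup: the bound on $\ell$ is the Schinzel--Tijdeman theorem and needs only that the polynomial have two \emph{distinct} zeros and $y>1$; the condition you quote (``two zeros whose multiplicities are coprime to $\ell$'') is the LeVeque/Brindza condition for a \emph{fixed} exponent and is circular when $\ell$ is the unknown.

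The genuine gap is that you never carry out the decisive verification, and you say so yourself in your ``main obstacle'' paragraph: you assert, but do not prove, that outside the listed tuples the polynomial has three simple zeros (needed for $\ell=2$) resp.\ two simple zeros (for $\ell\geq 3$). That verification \emph{is} the proof. In the paper it goes as follows: for $k\in\{2,4\}$ or $k\geq 6$ it is immediate from Lemma \ref{eff:1}(i), since the exceptional shifted Bernoulli polynomials occur only in degrees $4$ and $6$, i.e.\ $k\in\{3,5\}$, which are excluded there; for $k\in\{3,5\}$ the paper computes the discriminant of $S_{a,b}^k$ explicitly, finds that its only rational critical ratios give $(a,b)=(1,1)$ and $(2,1)$, reduces $(1,1)$ to Sch\"affer via $S_{1,1}^k(x)=S_k(x+1)$, and settles $(2,1)$ through $S_{2,1}^3(x)=2x^4-x^2$ and $S_{2,1}^5(x)=\tfrac13x^2(16x^4-20x^2+7)$ (two and four simple zeros); $k=1$ is the quadratic case. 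Moreover your claim that the listed tuples are \emph{exactly} those where the hypothesis fails is not accurate: for $(k,\ell)=(1,2)$ the polynomial $\tfrac12x(ax+2b-a)$ is quadratic for every $b$, so the three-simple-zeros criterion fails for all $(1,2,a,b)$, and indeed $a=b=1$ (square triangular numbers) gives infinitely many solutions --- likewise $a=b=1$ reproduces Sch\"affer's exceptional pairs with $x$ shifted. So the case analysis you defer is not a routine check; it both requires the discriminant (or an equivalent) computation and forces one to confront borderline cases that your exactness claim sweeps under the rug, and until it is done the proposal is a plan rather than a proof.
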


In degenerate case  $(k,a,b)\neq (1,2,1 )$ we have $x^2=y^{\ell}$, thus we can not give an upper bound for $\ell$. From the 5 exceptional cases 4 are the well-known examples by Sch\"affer ($b=0)$, and if  $(k,\ell,a,b)=(3,2,2,1)$ we get the equation $x^2(2x^2-1)=y^2$, and the theory of Pell equations yields infinitely many integer solutions in $x,y$. We remark that Bazs\'o \cite{B} considered a more general case for shifted power values of power sums.

One can treat the cases when the parameters $k$ or $\ell$ are small. Set
$I_1=\{(1,2),(1,4),(3,2),(3,4)     \}$ and $I_2=I_1\cup \{(5,2),(5,4)\}$. We obtain

\begin{theorem} \label{thm:eff2}

 Let $\ell\geq 2$ be a rational integer and  $k\in\{1,3\}$ with $(k,\ell)\notin I_1$, or $k\geq 1$ is a rational integer and $\ell\in \{2,4\}$ with $(k,\ell)\notin I_2$. Equation (\ref{eq:RS}) has only finitely many solutions in integers $x$ and $y$, and $\max(|x|,|y|)$ is bounded by an effectively computable constant depending on $a,b,c$ and $\ell$ or $k$, respectively.
\end{theorem}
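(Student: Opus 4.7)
The plan is to reduce equation~(\ref{eq:RS}) to a hyperelliptic equation of the form $Y^2 = f(Z)$ with $\deg f \geq 3$, and then to invoke the effective theorem of Brindza, which bounds the integer solutions once $f$ has at least three simple roots. The key inputs are four ``square-plus-constant'' identities obtained from the Bernoulli representation~(\ref{eq:mainI}):
\begin{align*}
8a\,S_{a,b}^1(x) &= (2ax + 2b - a)^2 - (2b - a)^2, \\
4a\,S_{a,b}^3(x) &= \bigl((ax+b)(ax+b-a)\bigr)^2 - \bigl(b(b-a)\bigr)^2,
\end{align*}
the first from $B_2(X) = X^2 - X + 1/6$ and the second from the Faulhaber-type factorization $B_4(X+t) - B_4(t) = ((X+t)(X+t-1))^2 - (t(t-1))^2$ with $t = b/a$; together with
\begin{align*}
4\,R_c^2(y) &= (2y + c)^2 - c^2, \\
R_c^4(y) &= (y^2 + 3cy + c^2)^2 - c^4.
\end{align*}

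For $k \in \{1,3\}$ and $\ell \notin \{2,4\}$, the left-hand identities convert (\ref{eq:RS}) into
$$Y^2 = \lambda\,R_c^\ell(y) + Q_k,$$
where $Y$ is a polynomial in $x$ of degree $k$, $\lambda \in \{8a,4a\}$ and $Q_k \in \{(2b-a)^2, (b(b-a))^2\}$; this is a hyperelliptic equation of degree $\ell \geq 3$ in $y$. For $\ell \in \{2,4\}$ and $k \notin \{1,3,5\}$, the right-hand identities convert (\ref{eq:RS}) into
$$Z^2 = \mu\,S_{a,b}^k(x) + T_\ell,$$
where $Z$ is a polynomial in $y$ of degree $\ell/2$ and $\mu, T_\ell$ are explicit in $c$; this is hyperelliptic of degree $k+1 \geq 3$ in $x$. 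In each case Brindza's theorem supplies an effective bound on the auxiliary variable (and hence on $(x,y)$), provided the right-hand polynomial has at least three simple roots.

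The technical heart of the proof, and its main obstacle, is precisely the verification of this simple-root condition outside the excluded sets $I_1, I_2$. For the first reduction, one analyses $\lambda R_c^\ell(y) + Q_k$: since $R_c^\ell(y)$ has $\ell$ distinct simple real roots and a constant shift can create at most $\lfloor \ell/2 \rfloor$ multiple roots (coming from critical values of $R_c^\ell$), at least three simple roots survive for $\ell \geq 5$, and the remaining case $\ell = 3$ is settled by a direct discriminant check. For the second reduction, the condition amounts to $B_{k+1}(X) - C$ having at least three simple roots for every rational $C$, which is known for $k+1 \geq 3$ apart from the exceptional $k \in \{3,5\}$ reflected in $I_2 \setminus \{(1,2),(1,4)\}$. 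Finally, the excluded pairs in $I_1 \cup I_2$ are precisely those for which the two sides of (\ref{eq:RS}) can be written simultaneously as squares plus constants, collapsing the equation into a Pell-type identity $U^2 - \alpha V^2 = \gamma$ that may carry infinitely many integer solutions and cannot be handled effectively by Brindza's theorem alone.
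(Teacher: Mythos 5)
Your overall route is the same as the paper's: complete the square on the $S_{a,b}^k$ side when $k\in\{1,3\}$ (respectively on the $R_c^\ell$ side when $\ell\in\{2,4\}$), reduce (\ref{eq:RS}) to a hyperelliptic equation $Y^2=f(\cdot)$, and apply the Schinzel--Tijdeman/Brindza bounds (the paper's Lemma \ref{lem:hyper}) once $f$ has at least three simple zeros; your square identities and your treatment of the case $\ell\in\{2,4\}$ via the zeros of $B_{k+1}(X)+d$ (the paper's Lemma \ref{eff:1}) coincide with the paper's argument.

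However, in the first half ($k\in\{1,3\}$, $\ell\geq 5$) your verification of the three-simple-zeros condition has a genuine gap. You argue that a constant shift of $R_c^\ell$ ``can create at most $\lfloor \ell/2\rfloor$ multiple roots'' and conclude that at least three simple roots survive for $\ell\geq 5$. This does not follow: every multiple root of $R_\ell+q$ is a double root (it sits at a simple critical point of $R_\ell$), so $m$ double roots consume $2m$ of the $\ell$ roots, and your bound only yields $\ell-2\lfloor\ell/2\rfloor\in\{0,1\}$ simple roots --- exactly the situation of $R_4(x)+1=(x^2+3x+1)^2$, where all roots pair up. The real issue, which mere counting cannot settle, is how many critical points of $R_\ell$ can share a common critical value $-q$ for a rational $q$; a priori all $\lfloor\ell/2\rfloor$ local minima could coincide in value, leaving at most one simple root. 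The paper closes this gap by invoking Yuan's theorem (Lemma \ref{Rl+q}), which asserts that $x(x+1)\cdots(x+\ell-1)+q$ has at least three simple zeros for all $\ell\geq 3$ and rational $q$, except for the two explicit cases at $\ell=4$ (which are excluded here since $(1,4),(3,4)\in I_1$); equivalently one needs the Beukers--Shorey--Tijdeman analysis showing that at most two (symmetric) critical points of $R_\ell$ share a critical value. You must cite or prove such a result; your $\ell=3$ discriminant check is fine (the critical values of $R_3$ are irrational, so no rational shift produces a double root), but for $\ell\geq 5$ the step as written fails.
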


When $(k,\ell)=(1,2)$ from (\ref{eq:RS}) we have the Pellian equation
$$
(2ax+2b-a)^2-2a(2y+c)^2=(2b-a)^2-2ac^2.
$$
Now, there are infinitely many solutions in positive integers $a,b,c$ of the equation $(2b-a)^2-2c^2=1$, and similarly for fixed triplet $(a,b,c)$ there exist infinitely many solutions $x,y$ for the previous Pellian equation.

Focusing on the exceptional cases we have

\begin{theorem} \label{thm:eff3}

Apart from the cases
\begin{itemize}
 \item $(k,\ell,a,b,c)=(1,4,2,b,c)$ with $b=\pm 2c^2+1$,
\item $(k,\ell,a,b,c)=(3,2,a,b,c)$ with $\frac{c^2}{a^3}-B_4\left( \frac{b}{a}    \right)=\frac{1}{30}$ or $-\frac{7}{240}$,
\item $(k,\ell,a,b,c)=(3,4,1,b,c)$ with $b(b-1)=2c^2$,
\item $(k,\ell,a,b,c)=(5,2,a,b,c)$ with $\frac{3c^2}{2a^5}-B_6\left( \frac{b}{a}    \right)=-\frac{1}{42}$ or $-\frac{1}{189}$,
 and
\item $ (k,\ell,a,b,c)=(5,4,a,b,c)$ with $\frac{6c^4}{a^5}-B_6\left( \frac{b}{a}    \right)=-\frac{1}{42}$ or $-\frac{1}{189},$
\end{itemize}
the equations
$$S_{a,b}^1 (x) = R_{c}^{4} (y), S_{a,b}^3 (x) = R_{c}^{2} (y),S_{a,b}^3 (x) = R_{c}^{4} (y),S_{a,b}^5 (x) = R_{c}^{2} (y),$$
and $S_{a,b}^5 (x) = R_{c}^{4} (y)$, respectively,
in integers $x,y$ imply $\max(|x|,|y|)<C_3,$ where $C_3$ is an effectively computable constant depending on $a,b$ and $c$.

\end{theorem}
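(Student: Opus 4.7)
Plan. Fix one of the five pairs $(k,\ell) \in \{(1,4),(3,2),(3,4),(5,2),(5,4)\}$. The common feature of all these cases is that $k$ is odd and $\ell$ is even, which enables a symmetrization that will reduce the problem to an equation in squares.

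The first step exploits the functional equation $B_{k+1}(1-z) = B_{k+1}(z)$ (valid because $k+1$ is even) together with (\ref{eq:mainI}) to write
\begin{equation*}
S_{a,b}^k(x) \;=\; \Phi_k\bigl((2ax+2b-a)^2\bigr)
\end{equation*}
for an explicit $\Phi_k \in \mathbb{Q}[U]$ of degree $(k+1)/2$, whose coefficients are read off from the closed forms
\begin{equation*}
B_4(z) = (z-\tfrac12)^4 - \tfrac12(z-\tfrac12)^2 + \tfrac{7}{240}, \qquad B_6(z) = (z-\tfrac12)^6 - \tfrac{5}{4}(z-\tfrac12)^4 + \tfrac{7}{16}(z-\tfrac12)^2 - \tfrac{31}{1344}.
\end{equation*}
The palindromy of $R_c^\ell(y) = \prod_{j=0}^{\ell-1}(y+jc)$ about its centre, together with the evenness of $\ell$, similarly yields $R_c^\ell(y) = \Psi_\ell\bigl((2y+(\ell-1)c)^2\bigr)$ for an explicit $\Psi_\ell \in \mathbb{Q}[V]$ of degree $\ell/2$. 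Putting $X = 2ax+2b-a$ and $Y = 2y+(\ell-1)c$, the equation becomes $\Phi_k(X^2) = \Psi_\ell(Y^2)$, to be solved in integers $X, Y$.

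The second step handles the cases with $\min\{\deg \Phi_k, \deg \Psi_\ell\} = 1$, namely $(1,4)$, $(3,2)$, and $(5,2)$. Solving the linear side produces an equation of the form $Y^2 = P(X)$ or $X^2 = Q(Y)$ with an explicit polynomial of degree at most $6$. Baker's effective theorem on integer points of hyperelliptic curves gives an effective bound for $\max(|X|,|Y|)$ unless the polynomial on the right-hand side has fewer than three simple roots. Computing its critical values in closed form and equating them to zero (using the explicit $B_4, B_6$ above) yields exactly the conditions $(a,b)=(2,\pm 2c^2+1)$ for $(1,4)$, $c^2/a^3 - B_4(b/a) \in \{1/30,-7/240\}$ for $(3,2)$, and $3c^2/(2a^5) - B_6(b/a) \in \{-1/42,-1/189\}$ for $(5,2)$, matching the theorem. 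For instance, in the case $(3,2)$ one finds $(X^2-a^2)^2 - 16aY^2 = K$ with $K = 16a^4 B_4(b/a) - 16ac^2 + \tfrac{8}{15}a^4$, and the two degenerations $K=0$ and $K=a^4$ (double root at $X=\pm a$ or at $X=0$) translate exactly to the two listed constants $1/30$ and $-7/240$.

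The third step treats $(3,4)$ and $(5,4)$, where both $\Phi_k$ and $\Psi_\ell$ have degree $\geq 2$. Here $\Phi_k(X^2)=\Psi_\ell(Y^2)$ is no longer a direct hyperelliptic equation, and I apply the Bilu--Tichy theorem to the pair $(\Phi_k, \Psi_\ell)$ viewed as polynomials in $U, V$. The low degrees ($\leq 3$ and $\leq 2$) restrict the admissible standard pairs to a short list; comparing leading and lower coefficients via $B_4, B_6$ rules out every decomposition outside a small degenerate set, and this degenerate set, transferred back to $(a,b,c)$, is precisely the loci $b(b-1)=2c^2$ (with $a=1$) for $(3,4)$ and $6c^4/a^5 - B_6(b/a) \in \{-1/42,-1/189\}$ for $(5,4)$. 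Outside these, a further factorization of $\Psi_\ell(Y^2) = \prod_{j=0}^{\ell-1}(Y - (2j-\ell+1)c)/2 \cdot \text{const}$ and an effective Runge/Baker-type analysis applied to each factor deliver the effective bound $C_3$ for $\max(|x|,|y|)$.

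The principal obstacle is the case $(5,4)$, where $\Phi_5$ is cubic and $\Psi_4$ quadratic and several potential Bilu--Tichy standard pairs, in particular those involving Dickson polynomials, must be eliminated by explicit coefficient comparison against the expansion of $B_6$ displayed above. Extracting exactly the two conditions $6c^4/a^5 - B_6(b/a) \in \{-1/42,-1/189\}$ from this comparison, and arranging that no spurious decomposition survives, is the most delicate step of the proof.
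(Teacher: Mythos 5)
Your treatment of the cases $(k,\ell)\in\{(1,4),(3,2),(5,2)\}$ is sound and is essentially the paper's own argument: complete the square on the side of degree one in the squared variable, obtain a hyperelliptic equation, and bound its solutions effectively by Schinzel--Tijdeman/Brindza once the relevant polynomial has three simple zeros, the failure of which reproduces exactly the listed exceptional loci (your explicit computation $K=16a^4B_4(b/a)-16ac^2+\tfrac{8}{15}a^4$ with degenerations $K=0$ and $K=a^4$ is correct). The genuine gap is your third step, for $(3,4)$ and $(5,4)$. The theorem asserts an \emph{effectively computable} bound $C_3$, and the Bilu--Tichy criterion is intrinsically ineffective, so no argument routed through it can produce $C_3$; moreover, applying it to $(\Phi_k,\Psi_\ell)$ ``viewed as polynomials in $U,V$'' does not address the actual equation $\Phi_k(X^2)=\Psi_\ell(Y^2)$, whose solutions have $U,V$ restricted to squares; the exceptional loci $b(b-1)=2c^2$ and $6c^4/a^5-B_6(b/a)\in\{-1/42,-1/189\}$ do not come from any decomposition analysis but from the failure of a three-simple-zeros condition; and ``Runge/Baker-type analysis applied to each factor'' of $2^{-\ell}\prod_j\bigl(Y+(2j-\ell+1)c\bigr)$ is not a method, since the individual factors satisfy no equation to which Baker or Runge applies without a divisor argument you have not supplied.

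What you are missing is that these two cases are still directly hyperelliptic, and your own Step 1 already contains the reduction: $\Phi_3$ and $\Psi_4$ are quadratics in $U$, resp. $V$, so completing the square in the appropriate one yields a square plus a constant. Concretely, $4aS_{a,b}^3(x)+(b^2-ab)^2=\bigl((ax+b)(ax+b-a)\bigr)^2$, so the $(3,4)$ equation reads $Z^2=4ac^4\bigl(R_4(y/c)+q\bigr)$ with $q=(b^2-ab)^2/(4ac^4)\geq 0$, and Lemma \ref{Rl+q} guarantees three simple zeros unless $q=1$, which forces $a=1$ and $b(b-1)=2c^2$; likewise $R_c^4(y)+c^4=(y^2+3cy+c^2)^2$, so the $(5,4)$ equation reads $W^2=\tfrac{a^5}{6}\bigl(B_6(x+\tfrac{b}{a})+d\bigr)$ with $d=6c^4/a^5-B_6(b/a)$, and Lemma \ref{eff:1} guarantees three simple zeros unless $d\in\{-1/42,-1/189\}$. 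Lemma \ref{lem:hyper} then bounds $x$ and $Z$ (resp. $W$) effectively, hence also $y$. Replacing your Step 3 by this reduction repairs the proof. As a smaller point, in Step 2 for $(1,4)$ you should also justify why Yuan's second exceptional value $-9/16$ cannot occur (non-negativity of $(2b-a)^2/(8ac^4)$) and why $(2b-a)^2=8ac^4$ together with $\gcd(a,b)=1$ forces $a=2$, before concluding $b=\pm 2c^2+1$.
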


Our main result is the following general analogue of Theorem 1.1 in \cite{BBKPT}.
\begin{theorem} \label{thm:main}
Let $k,\ell$ be rational integers with $k\geq 2, k\notin\{3,5\}$  and $\ell=3$ or $\ell\geq 5$. Then for all nonzero integers $a,b,c$ with $\gcd(a,b)=1$ equation (\ref{eq:RS}) has only finitely many solutions $(x,y)$.
\end{theorem}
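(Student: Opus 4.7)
The plan is to follow the Bilu--Tichy approach used in \cite{BBKPT} for the analogous equation $S_k(x)=R_\ell(y)$, adapted to our polynomials $S_{a,b}^k$ and $R_c^\ell$. By \eqref{eq:mainI}, equation \eqref{eq:RS} rewrites as
$$B_{k+1}\!\left(x+\tfrac{b}{a}\right)-B_{k+1}\!\left(\tfrac{b}{a}\right)=\frac{k+1}{a^k}\,R_c^\ell(y),$$
so both sides are polynomials over $\mathbb{Q}$ of degrees $k+1\ge 3$ and $\ell\ge 3$. After the substitutions $X=x+b/a-\tfrac12$ and $Y=y+(\ell-1)c/2$ the two sides acquire the parity symmetries inherited from $B_{k+1}(1-\cdot)=(-1)^{k+1}B_{k+1}(\cdot)$ and from the palindromic product defining $R_c^\ell$; in particular $B_{k+1}(X+\tfrac12)$ is an even polynomial in $X$ for $k$ odd and odd for $k$ even, while the centered product $\prod_{i=0}^{\ell-1}(Y+c(i-\tfrac{\ell-1}{2}))$ is even in $Y$ for $\ell$ even and odd for $\ell$ odd.

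If \eqref{eq:RS} had infinitely many integer solutions, the Bilu--Tichy theorem would force decompositions $S_{a,b}^k=\phi\circ f_1\circ\lambda$ and $R_c^\ell=\phi\circ g_1\circ\mu$, with $\lambda,\mu$ linear over $\mathbb{Q}$, $\phi\in\mathbb{Q}[x]$, and $(f_1,g_1)$ one of the five Bilu--Tichy standard pairs. The first key input is the classification of decompositions of $B_n$ due to Bilu--Brindza--Kirschenhofer--Pint\'er--Tichy: for $n=k+1\notin\{4,6\}$, equivalently $k\notin\{3,5\}$, the only nontrivial decompositions of $B_{k+1}$ are the obvious ones coming from the above symmetry; concretely $B_{k+1}=\hat B\circ q_2$ with $q_2(x)=(x-\tfrac12)^2$ when $k$ is odd, while $B_{k+1}$ is indecomposable when $k$ is even. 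A parallel analysis for $R_c^\ell$, using $\ell=3$ or $\ell\ge 5$, pins down its decompositions to the analogous shift-and-square quadratic. These two facts severely constrain $\phi$, $f_1$, and $g_1$, and in particular impose the divisibility relations $(\deg\phi)(\deg f_1)=k+1$ and $(\deg\phi)(\deg g_1)=\ell$.

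It then remains to eliminate each of the five standard pair types in turn. The monomial-type pairs $(x^q,\,ax^rp(x)^q)$ and $(x^2,(ax^2+b)p(x)^2)$ are excluded because $B_{k+1}$ and the factor $\hat B$ have only simple roots, so no multiplicity $q\ge 2$ can be forced on the left-hand side. The exceptional fifth pair has components of degrees $4$ and $6$, which is incompatible with the constraints $k+1\notin\{4,6\}$ and $\ell\notin\{2,4\}$. The main obstacle lies in excluding the Dickson pairs $(D_m(x,a^n),D_n(x,a^m))$ and $(a^{-m/2}D_m(x,a),-b^{-n/2}D_n(x,b))$: one must show that no linear change of variable transforms $\hat B$ (or $B_{k+1}$ itself, when $k$ is even) into a Dickson polynomial of the matching degree, and that simultaneously the centered factor of $R_c^\ell$ is not converted into the partner Dickson polynomial. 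This is ruled out by the ramification / monodromy analysis underlying the Bernoulli decomposition theorem: the monodromy group of $B_n$ for $n\ge 7$ is strictly larger than the dihedral monodromy characterizing Dickson polynomials, so the matching-degree Dickson decomposition cannot hold for both sides at once. This yields the desired contradiction and establishes the claimed finiteness.
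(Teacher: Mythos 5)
Your overall strategy (Bilu--Tichy plus decomposition information for both sides) is the same as the paper's, but the two places where the real work happens are not covered by your sketch. First, the pairs of the second and fifth kind are not eliminated by observing that $B_{k+1}$ (or $\widehat{S}_{(k+1)/2}$) has simple roots: because of the outer polynomial $\phi$ in the Bilu--Tichy conclusion, no root multiplicity is forced on $S_{a,b}^k$ itself. What these pairs actually produce is a functional identity of the shape $S_{a,b}^{k}(x)=R_{c}^{\ell}\bigl(p(x)\sqrt{\alpha x^2+\beta x+\gamma}+\delta\bigr)$, and excluding it is the technical heart of the proof (Lemma~\ref{lemma:mainineff}, the analogue of Theorem~4.4 of \cite{BBKPT}); its proof occupies several pages of coefficient identifications, uses Lemmas~\ref{thm:BPS}, \ref{lemma:Rlx}, \ref{lemma:aux} and the zero-structure results of Lemma~\ref{eff:1}, and it is precisely where the hypotheses $k\notin\{3,5\}$ enter. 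Your proposal neither states nor proves a substitute for this lemma, so the second- and fifth-kind cases are genuinely open in your argument.

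Second, the Dickson pairs cannot be dismissed by the asserted monodromy comparison. Quite apart from the fact that the claim about the monodromy of $B_n$ for $n\geq 7$ is not proved (and would not cover $k=2,4$, i.e.\ degrees $3$ and $5$), the critical configuration in the third-kind case has the Dickson component of degree $3$ on one side, and \emph{every} cubic with vanishing quadratic term is a Dickson polynomial up to linear equivalence, so no structural or monodromy obstruction exists there. This is why the paper (following (26)--(29) of \cite{BBKPT}) identifies the coefficients $s_3,s_1$ and $r_m,r_{m-2}$ explicitly and derives
\begin{equation*}
\frac{1}{2^9 3^5}=\left(\frac{c^m}{a^2}\right)^2\left(\frac{m^2-1}{24}\right)^m,
\end{equation*}
which is then contradicted by a rational-square argument (for $m$ even the right side is a square and the left is not; for $m$ odd one is forced to conclude that $m^2-1$ is a square). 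The fourth kind is handled by Lemma~\ref{lemma:ndickson}, and the first kind by Lemma~\ref{lemma:npower}; your root-multiplicity remark again ignores $\phi$ and does not replace these. As it stands, the proposal is a correct outline of the strategy but has genuine gaps at exactly the two steps that constitute the proof.
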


\section{Auxiliary results}

We denote by $\mathbb{C}[x]$ the ring of polynomials in the variable $x$ with complex coefficients. A decomposition of a polynomial $F(x) \in \mathbb{C}[x]$ is an equality of the following form
$$
F(x) = G_1 (G_2 (x)) \ \ \ (G_1 (x), G_2 (x) \in \mathbb{C}[x]),
$$
which is nontrivial if
$$
\deg G_1 (x) > 1 \ \ \ \text{and} \ \ \ \deg G_2 (x) > 1.
$$
Two decompositions $F(x) = G_1 (G_2 (x))$ and $F(x) = H_1 (H_2 (x))$ are said to be equivalent if there exists a linear polynomial $\ell (x) \in \mathbb{C}[x]$ such that $G_1 (x) = \ell (H_1 (x))$ and $H_2 (x) = \ell (G_2 (x))$. The polynomial $F(x)$ is called decomposable if it has at least one nontrivial decomposition; otherwise it is said to be indecomposable.

Bazs\'o, Pint\'er and Srivastava \cite{BPS} recently proved the following theorem about the decomposition of the polynomial $S_{a,b}^k \left(x\right)$.

\begin{lemma} \label{thm:BPS}
The polynomial $S_{a,b}^k \left(x\right)$ is indecomposable for even $k$. If $k=2v-1$ is odd, then any nontrivial decomposition of $S_{a,b}^k \left(x\right)$ is equivalent to the following decomposition:
\begin{equation}
S_{a,b}^k \left(x\right) = \widehat{S}_v \left(\left(x+\frac{b}{a} - \frac{1}{2}\right)^2\right).
\end{equation}
\end{lemma}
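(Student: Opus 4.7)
The plan is to reduce the statement to the known decomposition theory of the Bernoulli polynomials. By \eqref{eq:mainI},
\[
S_{a,b}^k(x) = \frac{a^k}{k+1}\bigl(B_{k+1}(x+b/a)-B_{k+1}(b/a)\bigr),
\]
so $S_{a,b}^k(x)$ is obtained from $B_n(x+b/a)$, with $n:=k+1$, by applying an outer affine transformation $L$. Precomposing with the translation $x\mapsto x-b/a$ shows that the decompositions of $S_{a,b}^k(x)$ correspond bijectively, up to equivalence, to the decompositions of $B_n(x)$. Thus it suffices to classify the nontrivial decompositions of $B_n(x)$.

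For the \emph{existence} part when $k$ is odd (so $n$ is even), I would invoke the reflection formula $B_n(1-x)=(-1)^nB_n(x)$: with $n$ even, this forces $B_n(1/2+y)$ to be even in $y$, hence $B_n(1/2+y)=\widetilde{B}_{n/2}(y^2)$ for some polynomial $\widetilde{B}_{n/2}$ of degree $v=n/2$. Setting $y=x+b/a-1/2$ and absorbing the outer affine map $L$ into $\widetilde{B}_{n/2}$ produces the claimed decomposition $S_{a,b}^k(x)=\widehat{S}_v\bigl((x+b/a-1/2)^2\bigr)$.

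The hard part is \emph{uniqueness}: one must show that every nontrivial decomposition of $B_n$ with $n$ even is equivalent to the one above, and that $B_n$ is indecomposable when $n$ is odd. This is a classical decomposition result for Bernoulli polynomials (going back to the work of Bilu--Brindza--Kirschenhofer--Pint\'er--Tichy). I would attack it through the ramification of $B_n$ viewed as a cover $\mathbb{P}^1\to\mathbb{P}^1$: since $B_n'(x)=nB_{n-1}(x)$ and $B_{n-1}$ has only simple roots, every finite critical point of $B_n$ is simple, giving exactly $n-1$ distinct critical points. A nontrivial decomposition $B_n=G_1\circ G_2$ with $\deg G_1,\deg G_2\geq 2$ would force a rigid multiplicity pattern on these critical points: roots of $G_2'$ produce ramification, and fibers $G_2^{-1}(\alpha)$ over critical values $\alpha$ of $G_1$ contribute additional ramification. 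Matching this forced pattern against the actual ramification of $B_n$ eliminates every possibility except $G_2(x)=\alpha(x-1/2)^2+\beta$ in the even case, and leaves no possibility at all in the odd case. Carrying out this ramification analysis is where the essential work lies; the reduction and existence steps are essentially formal.
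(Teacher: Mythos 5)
Your reduction step is fine: since $S_{a,b}^k(x)=\frac{a^k}{k+1}\bigl(B_{k+1}(x+b/a)-B_{k+1}(b/a)\bigr)$ differs from $B_{n}$, $n=k+1$, only by an outer degree-one map and an inner translation, equivalence classes of decompositions of $S_{a,b}^k$ and of $B_n$ correspond bijectively; and the existence of the quadratic decomposition for odd $k$ does follow from $B_n(1-x)=(-1)^nB_n(x)$ exactly as you say. But the lemma's actual content is the other direction: that $B_n$ is indecomposable for odd $n$ and that for even $n$ \emph{every} nontrivial decomposition is equivalent to $\widetilde{B}_{n/2}\bigl((x-1/2)^2\bigr)$. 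This you do not prove; you only announce a ramification strategy and state that "carrying out this ramification analysis is where the essential work lies." That is precisely the work the lemma requires, so as it stands the proposal has a genuine gap. (The paper itself does not reprove this either: it simply cites Theorem 2 of Bazs\'o--Pint\'er--Srivastava \cite{BPS}, which in turn rests on the classification of decompositions of Bernoulli polynomials from \cite{BBKPT}. Citing that result would have closed your argument; attempting to reprove it obliges you to actually do so.)

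Moreover, the mechanism you sketch is too weak to succeed as described. Knowing that $B_n'=nB_{n-1}$ has only simple zeros tells you that in a putative decomposition $B_n=G_1\circ G_2$ the polynomials $G_1'$ and $G_2'$ are separable and no zero of $G_1'$ is a critical value of $G_2$ --- but this is a \emph{generic} configuration, satisfied by essentially every decomposable polynomial in general position; it imposes no "rigid multiplicity pattern" and cannot by itself single out $G_2(x)=\alpha(x-1/2)^2+\beta$ in the even case, nor exclude all decompositions in the odd case. The known proofs use finer, specific input: the symmetry $B_n(1-x)=(-1)^nB_n(x)$ combined with the uniqueness (up to linear maps) of right composition factors of a given degree, together with arithmetic information about the zeros of Bernoulli polynomials. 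Without either that additional argument or an explicit appeal to the classification in \cite{BBKPT}/\cite{BPS}, the uniqueness and indecomposability assertions remain unproved.
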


\begin{proof}[Proof of Lemma \ref{thm:BPS}]
This is Theorem 2 of \cite{BPS}.
\end{proof}

For classifying the decompositions of the polynomial $R_c^{\ell} (x)$ we need the following lemma.

\begin{lemma} \label{lemma:Rlx}
The polynomial $R_{\ell} (x) =R_1 ^{\ell} (x)$  is indecomposable if  $k$  is odd. If  $\ell=2m$ is
even then any nontrivial decomposition of  $R_k(x)$  is equivalent to
\begin{equation}
R_{\ell}(x)=\widehat{R}_m((x+(\ell-1)/2)^2),
\end{equation}
where
$$\widehat{R}_m(x)=\left(x-\frac{1}{4}\right)\left(x-\frac{9}{4}\right)\cdots \left(x-\frac{(2m-1)^2}{4}\right).$$
In particular, the polynomial $\widehat{R}_m(x)$ is indecomposable for any $m$.
\end{lemma}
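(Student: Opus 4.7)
The plan is to pass to the centered polynomial $\widetilde{R}_\ell(y):=R_\ell\!\bigl(y-\tfrac{\ell-1}{2}\bigr)$, whose roots $\{-(\ell-1)/2,\ldots,(\ell-1)/2\}$ are symmetric about~$0$, so $\widetilde{R}_\ell(-y)=(-1)^\ell\widetilde{R}_\ell(y)$. Decompositions of $R_\ell$ and of $\widetilde{R}_\ell$ correspond bijectively under this linear shift, and for $\ell=2m$ the immediate factorisation
\[
\widetilde{R}_\ell(y)=\prod_{j=0}^{m-1}\bigl(y^{2}-\tfrac{(2j+1)^{2}}{4}\bigr)=\widehat{R}_m(y^{2})
\]
supplies the claimed decomposition $R_\ell(x)=\widehat{R}_m\bigl((x+\tfrac{\ell-1}{2})^{2}\bigr)$.

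Given any nontrivial decomposition $\widetilde{R}_\ell=G\circ H$, I would invoke the standard inheritance of symmetry through a polynomial decomposition: there is a linear polynomial $\tau$ with $H(-y)=\tau(H(y))$ and $G\circ\tau=(-1)^\ell G$. Since $y\mapsto -y$ is an involution, so is $\tau$, whence either $\tau=\mathrm{id}$ or $\tau(z)=-z+c$. The first forces $(-1)^\ell G=G$, so $\ell$ must be even and $H$ itself is even, yielding $\widehat{R}_m=G\circ H_0$ with $H(y)=H_0(y^{2})$. The second, after the shift $H\mapsto H-c/2$ and $G\mapsto G(\cdot+c/2)$, makes $H$ odd and forces $G$ to be even or odd according as $\ell$ is even or odd; writing $H(y)=yH_1(y^{2})$ and $G(z)=G_1(z^{2})$ (resp.\ $G(z)=zG_1(z^{2})$) then gives, for $\ell$ even, $\widehat{R}_m(t)=G_1\bigl(tH_1(t)^{2}\bigr)$, and for $\ell$ odd, after cancelling the common factor $y$, the multiplicative factorisation $P(u)=H_1(u)\,G_1\bigl(uH_1(u)^{2}\bigr)$ of $P(u):=\prod_{k=1}^{(\ell-1)/2}(u-k^{2})$.

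Both conclusions reduce the lemma to an indecomposability statement about $\widehat{R}_m$ (and an analogous factorisation statement for $P$). Assume $\widehat{R}_m=G_*\circ H_*$ nontrivially with $r:=\deg G_*\geq 2$, $s:=\deg H_*\geq 2$, $rs=m$. Since $\widehat{R}_m$ has $m$ distinct roots $\alpha_j=(2j-1)^{2}/4$, the roots of $G_*$ are distinct and the $\alpha_j$ partition into $r$ fibres $B_i=H_*^{-1}(y_i)$ of size~$s$, with the monic polynomials $\prod_{\beta\in B_i}(t-\beta)$ pairwise differing only by constants. Hence every elementary symmetric function of order $<s$ takes one and the same value on every block, and in particular each block sums to $s(2m-1)(2m+1)/12$---a strong arithmetic restriction on partitions of the odd squares.

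The main obstacle is exactly ruling out all such admissible block partitions (both for $\widehat{R}_m$ and analogously for $P$). A natural line of attack is to impose the symmetric-function constraints for orders $1,2,\dots,s-1$ simultaneously and derive infeasibility by a short congruence argument; a cleaner alternative is to verify that all $m-1$ critical values of $\widehat{R}_m$ are pairwise distinct. The chain rule would then force every critical value of $G_*$ to coincide with the common $\widehat{R}_m$-value at $s\geq 2$ distinct critical points, contradicting distinctness unless $G_*$ is linear. The same distinct-critical-value strategy disposes of the odd-$\ell$ residual factorisation and completes the proof.
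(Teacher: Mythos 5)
The paper does not reprove this lemma at all: it simply cites Theorem~4.3 of \cite{BBKPT}, so your attempt is measured against giving a genuinely self-contained argument. Your reduction step is essentially sound (modulo one point you should justify: the existence of the linear $\tau$ with $H(-y)=\tau(H(y))$ rests on the fact that two decompositions of the same polynomial with equal inner degrees are linearly equivalent in characteristic $0$; this is standard but is exactly the kind of lemma that must be stated or cited). The real problem is that the heart of the lemma is left undone. After your case analysis, everything hinges on (a) indecomposability of $\widehat{R}_m$, (b) excluding $\widehat{R}_m(t)=G_1\bigl(tH_1(t)^2\bigr)$, and (c) excluding the odd-$\ell$ factorisation $P(u)=H_1(u)\,G_1\bigl(uH_1(u)^2\bigr)$ — and for these you only name two candidate strategies (``a short congruence argument'' on the block partitions, or ``verify that all $m-1$ critical values of $\widehat{R}_m$ are pairwise distinct'') without carrying out either. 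The second is not a routine verification: critical values of $\widehat{R}_m$ alternate in sign, so the substantive claim is that same-sign local extrema have distinct magnitudes, which is essentially the classical statement about the extrema of $x(x+1)\cdots(x+\ell-1)$ used in \cite{BBKPT} and \cite{BST}; it needs a proof, not an assertion. Moreover the equal-block-sum observation is only a necessary condition and by itself rules out nothing.

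There is also a logical slip in the even-$\ell$, $\tau\neq\mathrm{id}$ branch: you say both conclusions ``reduce the lemma to an indecomposability statement about $\widehat{R}_m$,'' but the identity $\widehat{R}_m(t)=G_1\bigl(tH_1(t)^2\bigr)$ with $\deg G_1=1$, i.e.\ $\widehat{R}_m(t)=\alpha\,tH_1(t)^2+\beta$ with $\deg H_1\geq 1$, is \emph{not} excluded by indecomposability of $\widehat{R}_m$ (the inner polynomial then has full degree $m$), yet if it occurred for some $m>2$ it would produce a decomposition of $R_\ell$ with quadratic outer part inequivalent to the stated one, contradicting the lemma. So this case must be eliminated separately, e.g.\ again via simplicity of the roots of $\widehat{R}_m'$ together with distinctness of critical values, or by a direct multiplicity count on $\widehat{R}_m-\beta$. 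As it stands, the proposal is a plausible plan whose decisive steps — precisely the content of Theorem~4.3 of \cite{BBKPT} — remain unproved.
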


\begin{proof}[Proof of Lemma \ref{lemma:Rlx}]
See Theorem 4. 3 in \cite{BBKPT}.
\end{proof}

The proof of the general case is based  on the previous lemma and on the easy observation

\begin{equation} \label{eq:obser}
R_c^{\ell} (x)=c^{\ell} R_{\ell} \left(\frac{x}{c}\right).
\end{equation}
\begin{lemma} \label{lemma:Rclx}
The polynomial $R_c^{\ell} (x)$ is indecomposable if $\ell$ is odd. If $\ell=2m$ is even, then any nontrivial decomposition of $R_c^{\ell} (x)$ is equivalent to
$$
R_c^{\ell} (x) = \widehat{R}_c^m \left(\left(x + \frac{(\ell-1)c}{2}\right)^2\right),
$$
where
$$
\widehat{R}_c^m (x) = \left(x - \frac{c^2}{4}\right) \left(x - \frac{9c^2}{4}\right) \ldots \left(x - \frac{((2m-1)c)^2}{4}\right).
$$
and the polynomial $\widehat{R}_c^m (x)$ is indecomposable for any $m$.
\end{lemma}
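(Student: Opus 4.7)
The plan is to deduce everything from Lemma \ref{lemma:Rlx} via the scaling identity \eqref{eq:obser}, $R_c^\ell(x) = c^\ell R_\ell(x/c)$, which is just a linear change of variables combined with a nonzero scalar multiplication on the outside; such operations preserve both the existence and the structure (up to the equivalence of decompositions) of nontrivial decompositions.

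First I would handle the odd case. Suppose $\ell$ is odd and, for contradiction, $R_c^\ell(x) = G_1(G_2(x))$ is a nontrivial decomposition over $\mathbb{C}[x]$. Setting $H_1(y) = c^{-\ell} G_1(y)$ and $H_2(x) = G_2(cx)$ gives $R_\ell(x) = c^{-\ell} R_c^\ell(cx) = H_1(H_2(x))$, which is again nontrivial because $\deg H_i = \deg G_i > 1$. This contradicts the indecomposability half of Lemma \ref{lemma:Rlx}.

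Next I would treat $\ell = 2m$. Starting from the identity \eqref{eq:obser} and substituting the decomposition of $R_\ell(x)$ provided by Lemma \ref{lemma:Rlx},
$$R_c^\ell(x) = c^\ell R_\ell\!\left(\frac{x}{c}\right) = c^\ell \widehat{R}_m\!\left(\left(\frac{x}{c} + \frac{\ell-1}{2}\right)^2\right) = c^\ell \widehat{R}_m\!\left(\frac{1}{c^2}\left(x + \frac{(\ell-1)c}{2}\right)^2\right).$$
Defining $\widehat{R}_c^m(x) := c^\ell \widehat{R}_m(x/c^2)$ yields the stated decomposition, and expanding $\widehat{R}_m$ as a product and distributing the factor $c^\ell = c^{2m}$ across the $m$ factors produces precisely the product formula for $\widehat{R}_c^m$ claimed in the lemma. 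Conversely, any nontrivial decomposition of $R_c^\ell$ pulls back, by the same transformation as in the odd case, to a nontrivial decomposition of $R_\ell$; by Lemma \ref{lemma:Rlx}, the latter must be equivalent to $\widehat{R}_m((x+(\ell-1)/2)^2)$, and pushing this equivalence forward under the linear substitution shows that the original decomposition of $R_c^\ell$ is equivalent to the one displayed.

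Finally, the indecomposability of $\widehat{R}_c^m$ follows from the analogous scaling trick: $\widehat{R}_c^m(x) = c^\ell \widehat{R}_m(x/c^2)$, so any nontrivial factorization of $\widehat{R}_c^m$ would produce one of $\widehat{R}_m$, contradicting the last assertion of Lemma \ref{lemma:Rlx}. There is no real obstacle in the argument; the only thing to be careful about is the bookkeeping of the equivalence of decompositions when transferring between $R_\ell$ and $R_c^\ell$, which is clean because only linear polynomials are involved.
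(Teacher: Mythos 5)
Your proposal is correct and takes essentially the same route as the paper: both reduce everything to Lemma \ref{lemma:Rlx} by transferring decompositions through the scaling identity \eqref{eq:obser}, $R_c^{\ell}(x)=c^{\ell}R_{\ell}(x/c)$. You merely spell out in more detail the equivalence bookkeeping in the even case and the indecomposability of $\widehat{R}_c^m$, which the paper leaves implicit.
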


\begin{proof}[Proof of Lemma \ref{lemma:Rclx}]
Let $\ell$ be an odd integer with $\ell\geq 1$. On supposing the contrary we obtain
$$R_c^{\ell} (x)=f_1(f_2(x)),$$
where $\deg f_1>1$ and $\deg f_2>1$. Using (\ref{eq:obser}) we have
$$c^{\ell} R_{\ell} \left(\frac{x}{c}\right)=f_1(f_2(x))$$
and
$$R_{\ell}(x)=\frac{1}{c^{\ell}}f_1(f_2(cx))$$
which is a contradiction. In the even case, from Lemma \ref{lemma:Rlx} and (\ref{eq:obser}) we get
$$f_2(x)=\left(\frac{x}{c}+\frac{\ell-1}{2}\right)^2$$
and our lemma is proved.
\end{proof}

Our next lemma provides information on the structure of the zeros of Bernoulli polynomials.

\begin{lemma} \label{eff:1}
(i) For every $d\in \mathbb Q$ and rational integer $k\geq 3$ the polynomial $B_k(x)+d$ has at least three simple zeros apart from the cases $(k,d)\in\{(4,\frac{1}{30}),(4,-\frac{7}{240}),(6,-\frac{1}{42}),(6,-\frac{1}{189})\}$.

(ii) For every $d\in \mathbb Q$ and rational integer $k\geq 7$, the polynomial $B_k(x)+d$ has at least one complex nonreal zero.

(iii) The zeros of $B_k(x)$ are all simple.

\end{lemma}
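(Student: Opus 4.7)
My plan rests on the derivative identity $B_k'(x)=k\,B_{k-1}(x)$, which makes every multiple zero of $B_k(x)+d$ a common zero of $B_{k-1}$ and $B_k+d$, reducing (i) and (ii) to the simplicity statement (iii). For (iii) I invoke the classical theorem of Brillhart that all zeros of $B_k(x)$ are simple; equivalently, $B_k$ and $B_{k-1}$ share no zero, a coprimality that can also be extracted from the functional equations $B_k(x+1)-B_k(x)=kx^{k-1}$ and $B_k(1-x)=(-1)^k B_k(x)$ combined with induction on $k$.

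For (i), set $P(x):=B_k(x)+d$. Any multiple zero $\alpha$ satisfies $B_{k-1}(\alpha)=0$, and because $P''(\alpha)=k(k-1)B_{k-2}(\alpha)$ while (iii) applied to $B_{k-1}$ yields $\gcd(B_{k-1},B_{k-2})=1$, we get $B_{k-2}(\alpha)\ne 0$: every multiple zero of $P$ has multiplicity exactly $2$. Writing $s+2m=k$ with $s$ simple and $m$ double zeros, each double zero is a zero of $B_{k-1}$ at which $B_k(\alpha)=-d\in\mathbb{Q}$. The symmetries $B_{k-1}(1-x)=(-1)^{k-1}B_{k-1}(x)$ and $B_k(1-x)=(-1)^k B_k(x)$ pair the zeros of $B_{k-1}$ as $\alpha\leftrightarrow 1-\alpha$ with equal ($k$ even) or opposite ($k$ odd) $B_k$-values. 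To force $s\le 2$ would require $\lceil(k-2)/2\rceil$ zeros of $B_{k-1}$ to share a common rational $B_k$-value, which for $k\ge 7$ a careful accounting of symmetries and rationality constraints rules out. The remaining cases $k\in\{3,4,5,6\}$ are treated by direct enumeration of the pairs $(\alpha,B_k(\alpha))$ with $B_{k-1}(\alpha)=0$, using factorisations such as $B_3(x)=x(x-\tfrac12)(x-1)$ and $B_5(x)=B_3(x)(x^2-x-\tfrac13)$; this pinpoints exactly the four listed exceptions via identities like $B_4(x)+\tfrac{1}{30}=x^2(x-1)^2$ and $B_4(x)-\tfrac{7}{240}=(x-\tfrac12)^2(x^2-x-\tfrac14)$, together with the irrational-root coincidence $B_6\bigl((1\pm\sqrt{7/3})/2\bigr)=1/189$ for $k=6$.

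For (ii), a polynomial with only real zeros has derivative with only real zeros (Rolle's theorem together with multiplicity counting). Hence if $B_k(x)+d$ were all-real-rooted, so would be $P'=k\,B_{k-1}$; but for $k\ge 7$ the polynomial $B_{k-1}$ is known to possess non-real zeros (explicit for small $k$, as $B_6$ has exactly two real zeros and four non-real ones, and for larger $k$ following from Dilcher's estimate $\sim 2k/(\pi e)$ for the real-zero count), so $B_k(x)+d$ must have a non-real zero. The main obstacle throughout is part (i): the symmetry/rationality heuristics bound $m$ only asymptotically, and the four small exceptions---especially the $k=6$ coincidence where an irrational zero of $B_5$ produces the rational value $B_6=1/189$---must be isolated by explicit computation rather than by a uniform argument.
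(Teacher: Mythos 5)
The paper does not prove part (i) from scratch: it is quoted from the literature (Brillhart \cite{Bril} for $d=0$ and odd $k$, Pint\'er--Rakaczki \cite{pr} for nonzero $d$ and odd $k$, Rakaczki \cite{raka} for even $k\geq 4$), part (iii) is Dilcher's theorem \cite{dilcher}, and only part (ii) receives a short argument (repeated differentiation plus Rolle, terminating at the nonreal zero of $B_6$), which your Rolle argument reproduces correctly. The genuine gap in your proposal is the core of part (i) for general $k$. Your reduction is fine as far as it goes: granted (iii), every multiple zero of $B_k(x)+d$ has multiplicity exactly $2$, and having at most two simple zeros would force roughly $(k-2)/2$ zeros of $B_{k-1}$ (real or complex) to share the common value $-d$ of $B_k$. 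But the step ``a careful accounting of symmetries and rationality constraints rules this out for $k\geq 7$'' is not an argument. The zeros of $B_{k-1}$ involved need not be real, the involution $\alpha\mapsto 1-\alpha$ gives no bound on how many of them can share a $B_k$-value, and you concede yourself that the heuristic ``bounds $m$ only asymptotically.'' This missing step is exactly the content of the cited theorems of \cite{pr} and \cite{raka}, whose proofs need substantially more input (estimates on the real zeros of Bernoulli polynomials and von Staudt--Clausen type rationality/divisibility arguments); it cannot be dispatched in a sentence, and without it the statement for arbitrary rational $d$ and all $k\geq 3$ outside the four listed exceptions is not established.

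Two smaller points. First, your parenthetical claim that (iii) ``can also be extracted from the functional equations $B_k(x+1)-B_k(x)=kx^{k-1}$ and $B_k(1-x)=(-1)^kB_k(x)$ combined with induction'' is unsupported: the simplicity of all zeros of $B_k$ is a nontrivial theorem of Dilcher \cite{dilcher}, not an elementary induction, and the attribution to Brillhart is off (Brillhart enters the paper only for the $d=0$, $k$ odd case of (i)). If you cite the known theorem, as the paper does, this is harmless, but it should not be presented as easily reprovable. Second, your explicit small-degree computations (e.g.\ $B_4(x)+\tfrac1{30}=x^2(x-1)^2$, $B_4(x)-\tfrac7{240}=(x-\tfrac12)^2(x^2-x-\tfrac14)$, and the $k=6$ coincidence at the zeros of $x^2-x-\tfrac13$) are correct and do isolate the listed exceptions; that portion, together with your part (ii), is sound and in the case of (ii) essentially identical to the paper's argument.
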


\begin{proof}[Proof of Lemma \ref{eff:1}]
For $d=0$ and odd values of $k\geq 3$ Part (i) is a consequence of a theorem by Brillhart \cite[Corollary of Theorem 6]{Bril}. For non-zero rational $d$ and odd $k$ with $k\geq 3$ and for even values of $k\geq 4$ our lemma follows from \cite[Theorem]{pr}, and \cite[Theorem 2.3]{raka} and the subsequent remarks, respectively.

For (ii) assume that all the zeros of $B_k(x)+d$ are real. Then also all the zeros of its derivative
$$(B_k(x)+d)'=kB_{k-1}(x)$$
are real. By induction, all the roots of $B_{k-1}(x), B_{k-2}(x),\ldots $ are real. Since $k\geq 7$ and $B_6(x)$ has a complex nonreal root, we obtain a contradiction. Part (iii) was proved in \cite{dilcher}.

\end{proof}

Let $q$ be a rational number and put
$$f_{\ell,q}(x)=x(x+1)\cdots (x+\ell-1)+q.$$

\begin{lemma} \label{Rl+q}
Suppose that $\ell\geq 3$. Then $f_{\ell,q}(x)$ has at least three simple zeros apart from the cases $f_{4,1}(x)=x(x+1)(x+2)(x+3)+1$ and $f_{4,-\frac{9}{16}}(x)=x(x+1)(x+2)(x+3)-\frac{9}{16}$.
\end{lemma}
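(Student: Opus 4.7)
The strategy I would take exploits two facts: first, $R_\ell(x)=x(x+1)\cdots(x+\ell-1)$ has $\ell$ distinct real zeros, so by Rolle's theorem $R_\ell'$ has $\ell-1$ distinct real zeros and is in particular squarefree; consequently every multiple zero of $f_{\ell,q}=R_\ell+q$ has multiplicity exactly $2$. Writing $f_{\ell,q}(x)=p(x)^2 s(x)$ with $s$ squarefree of degree $r$, where $r$ is the number of simple zeros, the assumption $r\le 2$ combined with the parity of $\deg f_{\ell,q}=\ell$ forces $r=1$ when $\ell$ is odd and $r\in\{0,2\}$ when $\ell$ is even. I treat these cases separately.

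For $\ell=2m+1$ odd, shift to $y=x+(\ell-1)/2$, so that $R_\ell=yP(y^2)$ with $P(z)=\prod_{j=1}^m(z-j^2)$ and $R_\ell'(x)=Q(y^2)$ where $Q(z)=P(z)+2zP'(z)$ has degree $m$. A multiple zero is then of the form $y=\pm\sqrt{z_i}$ for a zero $z_i$ of $Q$, with $R_\ell$-value $\pm\sqrt{z_i}\,P(z_i)$; these two values are nonzero (since $Q(j^2)=2j^2P'(j^2)\neq 0$ excludes $P(z_i)=0$) and opposite in sign, so for a fixed $q$ at most one element of each $\pm$ pair contributes a double zero. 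The extremal configuration $r=1$ therefore requires $z_iP(z_i)^2=q^2$ for every $i$, equivalently
\[
Q(z)\,\Big|\,zP(z)^2-q^2,
\]
which by degree count (with $\deg Q=m$, $\deg(zP^2)=2m+1$) amounts to the Euclidean remainder $zP(z)^2\bmod Q(z)$ being the constant polynomial $q^2$. A direct computation gives remainder $4/27$ for $\ell=3$, so $q^2=4/27$ has no rational solution; for $\ell=5$ the remainder is $(116z+16)/25$, which is genuinely nonconstant, so no $q$ at all realizes the identity. For $\ell\ge 5$ odd in general I would prove by induction on $m$, using the reduction $\ell z^m\equiv -\text{(lower terms of }Q)\pmod{Q}$ to track the coefficient of $z^{m-1}$ in the remainder, that this coefficient is a nonzero rational expression in $1^2,2^2,\ldots,m^2$; the nonvanishing follows from $\prod_{j=1}^m P'(j^2)\neq 0$ (since $P$ has simple roots). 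This excludes $r=1$ for rational $q$.

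For $\ell=2m$ even, Lemma~\ref{lemma:Rlx} gives $R_\ell(x)=\widehat{R}_m(u)$ with $u=(x+(\ell-1)/2)^2$. For $\ell=4$ one works directly with the quadratic $\widehat{R}_2(u)+q=u^2-5u/2+9/16+q$: its discriminant vanishes exactly for $q=1$ (yielding $(u-5/4)^2$ and thus $f_{4,1}=(x^2+3x+1)^2$ with no simple zeros) and $u=0$ is a root exactly for $q=-9/16$ (yielding $u(u-5/2)$ with two simple zeros), so these are the listed exceptions. For $\ell\ge 6$ even the symmetry $R_\ell(-\ell+1-x)=R_\ell(x)$ forces $p$ to be symmetric or antisymmetric about $x=-(\ell-1)/2$. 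The subcase $r=0$ with $p$ symmetric yields $\widehat{R}_m(u)+q=P_0(u)^2$, producing the nontrivial decomposition $\widehat{R}_m=(t^2-q)\circ P_0$ that contradicts the indecomposability of $\widehat{R}_m$ from Lemma~\ref{lemma:Rlx} whenever $m\ge 3$ is even, while with $p$ antisymmetric one gets $(\widehat{R}_m(u)-\widehat{R}_m(0))/u=P_1(u)^2$ (requiring $m$ odd); evaluating this identity at the $m$ zeros $u_i^{\ast}=(2i-1)^2/4$ of $\widehat{R}_m$ imposes $m$ prescribed values (up to sign) on the $(m+1)/2$ coefficients of $P_1$, an overdetermined system that a short interpolation check rules out for $m\ge 3$. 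The subcase $r=2$ is analogous, with an extra linear factor $(u-a^2)$ producing the same type of overdetermined identity. The principal obstacle, as I see it, is the uniform nonconstancy of $zP(z)^2\bmod Q(z)$ for all odd $\ell\ge 5$: small cases are routine, but a clean general argument requires careful inductive tracking of the leading remainder coefficient through the Euclidean algorithm against $Q$.
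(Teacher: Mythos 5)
Your reduction is set up correctly (all multiple zeros of $f_{\ell,q}$ have multiplicity exactly two because $R_\ell'$ is squarefree, so failure of the lemma means $r=1$ for odd $\ell$ and $r\in\{0,2\}$ for even $\ell$), and your computations for $\ell=3$, $\ell=5$ and $\ell=4$ are correct. But the argument is not a proof for general $\ell$, and you essentially concede this yourself. In the odd case the whole burden is the claim that the remainder of $zP(z)^2$ modulo $Q(z)$ is nonconstant for every $m\ge 2$; the proposed induction ``tracking the coefficient of $z^{m-1}$'' is never carried out, and the assertion that its nonvanishing ``follows from $\prod_j P'(j^2)\neq 0$'' is not substantiated — simplicity of the roots of $P$ gives no visible control over a Euclidean remainder coefficient. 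In the even case $\ell\ge 6$ the gaps are worse: the antisymmetric subcase ($\widehat{R}_m(u)-\widehat{R}_m(0)=uP_1(u)^2$) and the entire $r=2$ case are dismissed as ``overdetermined'' systems ruled out by ``a short interpolation check.'' Overdeterminedness by itself excludes nothing, and this is exactly the dangerous configuration: the genuine exception $f_{4,-9/16}(x)$, i.e.\ $\widehat{R}_2(u)+q=u(u-\tfrac52)$, is precisely an instance of your $r=2$ shape, so any correct argument must explain why such identities $\widehat{R}_m(u)+q=(u-a)P_0(u)^2$ or $u(u-a)P_1(u)^2$ die out for $m\ge 3$ while surviving at $m=2$. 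As written, those steps are assertions, not proofs.

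For comparison, the paper does not prove the lemma from first principles at all: it is quoted as a reformulation of Theorem 2 of Yuan \cite{yuan}, where the possible factorizations $x(x+1)\cdots(x+\ell-1)+q=$ (square)$\times$(low-degree factor) are classified. So your route — critical-value/symmetry analysis of $R_\ell+q$ — is genuinely different from the paper's (which buys the result by citation), and the skeleton is reasonable; but to stand on its own it needs (i) a real argument for the nonconstancy of $zP(z)^2 \bmod Q(z)$ for all odd $\ell\ge 7$ (for instance, via the classical fact that the absolute values of the local extrema of $R_\ell$ strictly increase away from the center, which makes the required equalities $z_iP(z_i)^2=q^2$ for all $i$ impossible once $m\ge 2$), and (ii) honest treatments of the antisymmetric and $r=2$ subcases in the even case, rather than appeals to overdetermination.
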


\begin{proof}[Proof of Lemma \ref{Rl+q}] This is a reformulation of Theorem 2 in \cite{yuan}.
\end{proof}

Our next auxiliary result is an easy consequence of an effective theorem concerning the $S$-integer solutions of so-called hyperelliptic equations.

\begin{lemma}\label{lem:hyper}
Let $f(x)$ be a polynomial with rational coefficients and with at least two distinct zeros and $u,v$ be fixed positive rational numbers. Then the equation
$$f\left(\frac{x}{u}\right)=vy^z$$
in integers $x, y>1$ and $z>1$ implies $z<C_3$, where $C_3$. Further, if the polynomial $f$ has at least two simple zeros, then all the solutions $x$ and $y$ of the equation
$$f\left(\frac{x}{u}\right)=vy^m, m\geq 3$$
satisfy $\max(|x|,y)<C_4$, and if $f$ possesses at least three simple zeros then all the solutions $x.y$ of the equation
$$f\left(\frac{x}{u}\right)=vy^2$$
are bounded by $C_5$. Here $C_3,C_4$ and $C_5$ are effectively computable constants depending on  the parameters of $f, u$ and $v$.
\end{lemma}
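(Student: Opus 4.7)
The plan is to reduce the equation to an equivalent polynomial--exponential equation over the integers in which the polynomial on the left inherits from $f$ its zero-multiplicity structure, and then to quote three classical effective theorems, one for each assertion.

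First I would clear denominators. Let $n=\deg f$, let $d\in\mathbb{Z}_{>0}$ be a common denominator of the coefficients of $f$, write $u=u_1/u_2$ in lowest terms with $u_1,u_2\in\mathbb{Z}_{>0}$, and write $v=v_1/v_2$ in lowest terms with $v_1,v_2\in\mathbb{Z}_{>0}$. Then
$$P(X):=v_2\,d\,u_1^{\,n}\,f\!\left(\frac{X}{u}\right)\in\mathbb{Z}[X],$$
and the complex zeros of $P$ are precisely the numbers $u\alpha$ for $\alpha$ a zero of $f$, with unchanged multiplicities. Hence $P$ has at least two distinct zeros, or at least two simple zeros, or at least three simple zeros, exactly when $f$ does. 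The equation $f(x/u)=vy^{\,z}$ is then equivalent to
$$P(x)=W\,y^{\,z},\qquad W:=d\,u_1^{\,n}\,v_1\in\mathbb{Z}\setminus\{0\},$$
which is an equation in integers $x,y,z$ of the standard polynomial--exponential shape.

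For the first assertion, since $P$ has at least two distinct complex zeros and $y>1$, I would invoke the effective theorem of Schinzel and Tijdeman on $P(x)=Wy^{\,z}$, which bounds $z$ effectively in terms of $P$ and $W$ and yields the constant $C_3$. For the second assertion, with $m\ge 3$ fixed and $P$ possessing at least two simple zeros, I would apply Brindza's effective form of LeVeque's theorem to obtain an effective upper bound on $\max(|x|,y)$. For the third assertion, with $m=2$ and $P$ having at least three simple zeros, I would invoke Baker's classical effective theorem on hyperelliptic equations. In each case, tracking constants back through the reduction furnishes a $C_i$ depending only on the parameters of $f$ together with $u$ and $v$.

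The only point in the reduction requiring any vigilance is the preservation of the multiplicity structure of the zeros, but this is immediate because $X\mapsto X/u$ followed by multiplication by a nonzero scalar is a bijection on $\overline{\mathbb{Q}}$ preserving multiplicities. Consequently there is no genuine obstacle to overcome: the substance of the lemma lies entirely in the three deep effective theorems named above, which are applied as black boxes.
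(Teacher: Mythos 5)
Your proposal is correct and follows essentially the same route as the paper, which disposes of the lemma by citing the Schinzel--Tijdeman theorem together with Brindza's effective result on $y^m=f(x)$; your denominator-clearing reduction is the routine step the authors leave implicit. The only cosmetic difference is that you invoke Baker's theorem for the case $m=2$ with three simple zeros, whereas Brindza's theorem (already cited) covers that case as well.
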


\begin{proof}[Proof of Lemma \ref{lem:hyper}]
This lemma is an easy consequence of a classical theorem of Schinzel and Tijdeman \cite{ST} and the main result of \cite{brindza}.
\end{proof}

The ineffective statement of this paper is  mainly based on the following lemma, which is analogous to Theorem 4.4 in \cite{BBKPT}.

\begin{lemma} \label{lemma:mainineff}
Let $k\geq 2$ be a rational integer with $k\notin\{3,5\}$. There exist no polynomial $p(x)$ and  $\alpha, \beta, \gamma, \delta\in \mathbb C$ such that
$$S_{a,b}^{k}(x)=R_{c}^{\ell}(p(x)\sqrt{\alpha x^2+\beta x+\gamma}+\delta).$$
\end{lemma}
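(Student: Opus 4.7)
The plan is to first analyze when the right-hand side of the claimed identity is actually a polynomial in $x$, converting it into a polynomial decomposition of $S_{a,b}^{k}$, and then to apply Lemmas 2.1, 2.3 and 2.4 to rule out every possibility. Writing $Q(x)=\alpha x^{2}+\beta x+\gamma$, if $Q$ is not a perfect square in $\mathbb{C}[x]$ then $\sqrt{Q}$ is algebraic of degree two over $\mathbb{C}(x)$, and applying the Galois automorphism $\sqrt{Q}\mapsto-\sqrt{Q}$ to
\begin{equation*}
S_{a,b}^{k}(x)=\prod_{j=0}^{\ell-1}\bigl(p(x)\sqrt{Q(x)}+\delta+jc\bigr)
\end{equation*}
forces $\prod_{j=0}^{\ell-1}(t+\delta+jc)$ to be even in the formal variable $t=p(x)\sqrt{Q(x)}$. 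Matching leading coefficients gives $\ell=2m$ even, and the symmetry of the multiset $\{-\delta-jc\}$ about $0$ gives $\delta=-(\ell-1)c/2$; substituting and invoking Lemma~\ref{lemma:Rclx} the identity becomes
\begin{equation*}
S_{a,b}^{k}(x)=\widehat{R}_{c}^{m}\bigl(p(x)^{2}Q(x)\bigr).\qquad (\star)
\end{equation*}
If $Q$ is a perfect square, then $\sqrt{Q}$ is itself a polynomial and the identity reduces to an honest polynomial decomposition $S_{a,b}^{k}=R_{c}^{\ell}\circ P$, which is handled by the same strategy below.

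If $k$ is even, then by Lemma~\ref{thm:BPS} the polynomial $S_{a,b}^{k}$ is indecomposable, so $(\star)$ must be trivial. Either $\deg\widehat{R}_{c}^{m}=1$, in which case $(\star)$ reduces to $S_{a,b}^{k}(x)+c^{2}/4=p(x)^{2}Q(x)$; the right-hand side has at most two simple zeros (coming from the roots of $Q$), whereas by \eqref{eq:mainI} the left-hand side equals $\frac{a^{k}}{k+1}(B_{k+1}(x+b/a)+d)$ for some $d\in\mathbb{Q}$, which by Lemma~\ref{eff:1}(i) has at least three simple zeros since $k+1\ge3$ and $k+1\notin\{4,6\}$. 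Or $\deg(p^{2}Q)=1$, in which case $S_{a,b}^{k}$ is a linear pull-back of $\widehat{R}_{c}^{m}$, i.e.\ all its zeros are simple, real, and placed in the rigid arithmetic pattern $\{L^{-1}(c^{2}(2i-1)^{2}/4)\}$. For $k\ge6$ this is contradicted by the nonreal zero provided by Lemma~\ref{eff:1}(ii), and the residual cases $k=2,4$ are dispatched by noting that the consecutive-difference ratios of the zeros of $S_{a,b}^{k}$, governed by $B_{k+1}$, cannot match those of the squares of consecutive odd integers.

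Assume now $k=2v-1$ is odd with $v\ge4$. By Lemma~\ref{thm:BPS} every nontrivial decomposition of $S_{a,b}^{k}$ is equivalent to $\widehat{S}_{v}\circ((x+b/a-1/2)^{2})$, while $\widehat{R}_{c}^{m}$ is indecomposable by Lemma~\ref{lemma:Rclx}, so $(\star)$ is nontrivial and must be equivalent to this canonical decomposition. Matching degrees yields $m=v$, and the equivalence produces a linear polynomial $L\in\mathbb{C}[x]$ with $p(x)^{2}Q(x)=L((x+b/a-1/2)^{2})$ and $\widehat{S}_{v}=\widehat{R}_{c}^{m}\circ L^{-1}$. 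The first equation forces $\deg(p^{2}Q)=2$, and since $Q$ is assumed not to be a perfect square, $p$ must be a nonzero constant and $Q$ a quadratic shift of $(x+b/a-1/2)^{2}$. The second equation, combined with the definition of $\widehat{S}_{v}$ via $\widehat{S}_{v}(y^{2})=\frac{a^{k}}{k+1}(B_{k+1}(y+1/2)-B_{k+1}(b/a))$, translates into the statement that every zero of $B_{k+1}(w)-B_{k+1}(b/a)$ lies in $\mathbb{R}\cup(1/2+i\mathbb{R})$ and that, after squaring $w-1/2$, they form the rigid arithmetic pattern $\{Ac^{2}(2i-1)^{2}/4+B\}_{i=1}^{v}$. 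Since $k+1\ge8$, Lemma~\ref{eff:1}(ii) guarantees a nonreal zero, and the hard step is to show that the distribution of such nonreal zeros of $B_{k+1}(x)+d$ is incompatible with the prescribed arithmetic pattern of rescaled odd squares; this compatibility check, leveraging the reality constraint of the pattern together with the rigidity of the differences $y_{i+1}-y_{i}=2c^{2}i$, is where the argument is most delicate and is the main obstacle. The exclusions $k\notin\{3,5\}$ enter precisely through the exception list in Lemma~\ref{eff:1}(i) used in the even-$k$ step, while here the threshold $k+1\ge8$ in Lemma~\ref{eff:1}(ii) is what makes the odd-$k$ argument go through.
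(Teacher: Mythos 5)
Your reduction of the non-square case is essentially the paper's: the parity/Galois argument giving $\ell=2m$ even and $\delta=-(\ell-1)c/2$, hence $S_{a,b}^k(x)=\widehat{R}_c^m\bigl(p(x)^2Q(x)\bigr)$, and the use of Lemma \ref{thm:BPS}, Lemma \ref{lemma:Rclx} and Lemma \ref{eff:1}(i) for $m=1$ all match. But the proposal has genuine gaps. First, the case where $Q$ is a perfect square is not ``handled by the same strategy below'': there the inner function is an arbitrary polynomial $P$, $\ell$ need not be even, and the hardest sub-cases are $\deg P=1$ (so $S_{a,b}^k(x)=c^{\ell}R_{\ell}(\lambda x+\nu)$ with no decomposition to exploit) and, for odd $k$, $\deg P=2$, which leads to $\widehat{S}_{(k+1)/2}(x)=c^{\ell}R_{\ell}(\lambda x+\nu)$. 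The paper settles these only by proving $\lambda,\nu\in\mathbb{Q}$ via Lemma \ref{lemma:aux} (which in turn needs that $S_{a,b}^k$, resp. $\widehat{S}_v$, vanishes at a rational point and is not of the form $h((x-\beta)^d)$ --- for $\widehat{S}_v$ this uses simplicity of the zeros of Bernoulli polynomials, Lemma \ref{eff:1}(iii), and the sub-case $b/a=1/2$ with $k$ odd must be treated by a separate coefficient identification), then invoking reality of the zeros to force $k\le 5$, and finally by explicit coefficient comparisons (e.g.\ the identity $(\ell-2)x^2=-(2\ell-1)(3\ell^2-14\ell+7)/12$). None of this is replaced by anything in your sketch.

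Second, inside the non-square case itself you assert, when $\deg(p^2Q)=1$, that the zeros of $S_{a,b}^k$ are \emph{real} because they are a ``linear pull-back'' of the real zeros of $\widehat{R}_c^m$; but the linear map has a priori complex coefficients, so reality does not follow --- this is exactly why the paper first proves rationality of the linear change via Lemma \ref{lemma:aux}. The residual cases $k=2,4$ are then ``dispatched'' by an unproved assertion about difference ratios of zeros, whereas the paper does the actual coefficient computations. Third, in the odd-$k$ branch you declare $(\star)$ nontrivial without excluding $m=1$ or $\deg(p^2Q)=1$ for odd $k$, and, most importantly, you openly concede that the decisive step --- showing the zeros of $B_{k+1}(x)+d$ cannot fit the pattern of rescaled odd squares (where a nonreal zero alone does not suffice, since zeros on the line $1/2+i\mathbb{R}$ are compatible with the pattern) --- is ``the main obstacle'' and is left unproved. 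So the proposal is an outline of roughly the right shape for half of the proof, but it does not constitute a proof: the polynomial case, the rationality step, the small-$k$ verifications, and the key incompatibility argument in the odd case are all missing.
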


To prove this lemma we need the next result.

\begin{lemma} \label{lemma:aux}
Assume that $f(x), g(x)\in \mathbb Q[x]$ and that $f(x)=g(\lambda x+\nu)$. Further, suppose that all the zeros of $g(x)$ are rational and that $f(x)$ vanishes at $\beta\in \mathbb Q$ but it is not of the form $h((x-\beta)^d)$, where $h(x)\in \mathbb Q[x]$ and $d>1$. Then $\lambda, \nu\in \mathbb Q$.
\end{lemma}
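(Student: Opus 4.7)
The plan is to use Galois theory to convert the decomposition $f(x) = g(\lambda x + \nu)$ into an internal affine symmetry of $g$, to read that symmetry back as a decomposition of $f$ of the shape $h((x-\beta)^d)$ with $d>1$, and then to invoke the hypothesis on $f$ to force $d=1$ in every instance; equivalently, $\lambda \in \mathbb Q$.

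For the setup, comparing leading coefficients in $f(x) = g(\lambda x + \nu)$ shows that $\lambda^{\deg g} \in \mathbb Q^\times$, so $\lambda$ is algebraic; since $f(\beta)=0$ and every zero of $g$ is rational, $\rho_0 := \lambda\beta + \nu$ is a rational zero of $g$, and $\nu = \rho_0 - \lambda\beta$ is algebraic as well. Any $\sigma \in \mathrm{Gal}(\overline{\mathbb Q}/\mathbb Q)$ therefore acts on $(\lambda,\nu)$; applying $\sigma$ to $f(x) = g(\lambda x + \nu)$ and using $\sigma(f)=f$, $\sigma(g)=g$ yields a second representation $f(x) = g(\sigma(\lambda)x + \sigma(\nu))$. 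Equating, and using $\sigma(\nu) = \rho_0 - \sigma(\lambda)\beta$, a short computation reduces $g(\lambda x + \nu) = g(\sigma(\lambda)x + \sigma(\nu))$ to the identity $g(y) = g(\rho_0 + \mu_\sigma^{-1}(y - \rho_0))$, where $\mu_\sigma := \lambda/\sigma(\lambda)$. Writing $\tilde g(u) := g(u + \rho_0) \in \mathbb Q[u]$, this reads $\tilde g(u) = \tilde g(\mu_\sigma^{-1}u)$; comparing coefficients forces $\mu_\sigma$ to be a root of unity of some order $d$, and forces every monomial of $\tilde g$ with nonzero coefficient to have exponent divisible by $d$. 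Hence $\tilde g(u) = H(u^d)$ for some $H \in \mathbb Q[y]$.

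Substituting back gives $f(x) = g(\lambda(x-\beta)+\rho_0) = H(\lambda^d(x-\beta)^d)$, whence $f(x+\beta) = \sum_k h_k \lambda^{dk} x^{dk}$. Rationality of $f(x+\beta) \in \mathbb Q[x]$ forces $h_k \lambda^{dk} \in \mathbb Q$ for every $k$, so $H'(y) := H(\lambda^d y) = \sum_k h_k \lambda^{dk} y^k$ lies in $\mathbb Q[y]$ and $f(x) = H'((x-\beta)^d)$. The hypothesis forbids this for $d>1$, so $d=1$, i.e., $\sigma(\lambda) = \lambda$; since $\sigma$ was arbitrary, $\lambda \in \mathbb Q$ and $\nu = \rho_0 - \lambda\beta \in \mathbb Q$. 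The creative step, and the one I expect to be the main obstacle, is the extraction of the affine self-symmetry: one must notice that conjugating the linear piece $(\lambda,\nu)$ by a Galois automorphism produces a second decomposition of the same $f$ through the same $g$, which collapses cleanly to a cyclic self-map of $g$ centered at the rational root $\rho_0$. The ensuing coefficient analysis and the reduction to the forbidden form are then routine, and the hypothesis on $f$ at $\beta$ is exactly calibrated to close the argument.
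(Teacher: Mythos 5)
Your proof is correct. Note that the paper itself gives no argument for this lemma: it is quoted verbatim as Lemma 4.5 of the Bilu--Brindza--Kirschenhofer--Pint\'er--Tichy paper, so there is no in-paper proof to compare with; what you have written is a self-contained reconstruction of the cited result, and it is the natural Galois-theoretic one. The key steps all check out: $\lambda^{\deg g}\in\mathbb Q^\times$ and $\lambda\beta+\nu=\rho_0$ a rational zero of $g$ make $\lambda,\nu$ algebraic; conjugating the coefficient identity $f(x)=g(\lambda x+\nu)$ by any $\sigma\in\mathrm{Gal}(\overline{\mathbb Q}/\mathbb Q)$ and recentering at $\rho_0$ gives $\tilde g(u)=\tilde g(\mu_\sigma^{-1}u)$ with $\mu_\sigma=\lambda/\sigma(\lambda)$, so $\mu_\sigma$ is a root of unity of some order $d$ (here you use that $g$ is nonconstant, so $\tilde g$ has a nonzero coefficient in positive degree) and $\tilde g(u)=H(u^d)$; then $f(x)=H(\lambda^d(x-\beta)^d)$, and the rationality of the coefficients of $f(x+\beta)$ shows $H(\lambda^d y)\in\mathbb Q[y]$, so the exclusion of the shape $h((x-\beta)^d)$ with $d>1$ forces $d=1$, i.e.\ $\sigma(\lambda)=\lambda$ for every $\sigma$, whence $\lambda,\nu\in\mathbb Q$. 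The only points you pass over silently are the degenerate ones --- $\lambda=0$ or $f$ constant, which are ruled out because $f(\beta)=0$ together with the hypothesis on the shape of $f$ excludes $f\equiv 0$ --- and these are trivial to add; with that caveat the argument is complete and the hypotheses (rational zeros of $g$, the rational zero $\beta$ of $f$, the non-representability condition) are used exactly where they are needed.
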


\begin{proof} This is Lemma 4.5 in \cite{BBKPT}.
\end{proof}

\begin{proof}[Proof of Lemma \ref{lemma:mainineff}]
We have
$$R_{c}^{\ell}(p(x)\sqrt{\alpha x^2+\beta x+\gamma}+\delta)=c^{\ell}R_{\ell}\left((p(x)/c\sqrt{\alpha x^2+\beta x+\gamma}+\delta/c\right),$$
so up to replacing $p(x)$ and $\delta$ by $p(x)/c$ and $\delta/c$, we may work with the polynomial $c^{\ell}R_{\ell}(x)$ instead of the polynomial $R_{c}^{\ell}(x)$. We follow the proof of Theorem 4.4 in \cite{BBKPT}. We start with the particular case for $k, \ell \geq 2$ there exist no polynomial $p(x)$ such that
$$S_{a,b}^{k}(x)=c^{\ell} R_{\ell}(p(x)).$$
Assume on the contrary, Lemma \ref{thm:BPS} implies that $\deg p(x)\leq 2$. Suppose first that $\deg p(x)=1$. Then $p(x)=\lambda x + \nu$ and $\ell=k+1$. Suppose first that $\frac{b}{a}=\frac{1}{2}$ and that $k$ is odd. Then $b=1, a=2$ and
$$S_{2,1}^{k}(x)=\frac{2^k}{k+1}\left(B_{k+1}\left(x+\frac{1}{2}\right)-B_{k+1}\left(\frac{1}{2}\right)\right)=$$
$$\frac{2^k}{k+1}\left(x^{k+1}-\frac{(k+1)k}{24}x^{k-1}+\frac{(k+1)k(k-1)(k-2)}{384}x^{k-3}+\ldots \right)$$
for all $k\geq 5$. The zeros of $R_{\ell}(\lambda x+\nu)$ are
$$-\frac{j+\nu}{\lambda}$$
for $j=0, \ldots, \ell$, and their sum must be $0$ because $x^k$ appears with coefficient equal to zero in  $S_{2,1}^{k}(x)$, therefore
$$0=-\frac{1}{\lambda}\sum_{j=0}^{k}(j+\nu),$$
so
$$\nu=-\frac{k}{2}=-\frac{\ell-1}{2}.$$
Thus we get that
$$S_{2,1}^{k}(x)=c^{k+1}R_{k+1}\left(\lambda x-\frac{k}{2}\right)=c^{k+1}\widehat{R}_{(k+1)/2}((\lambda x)^2)=$$
$$=c^{k+1}\left((\lambda x)^2-\frac{1}{4}\right)\left((\lambda x)^2-\frac{9}{4}\right)\cdots \left((\lambda x)^2-\frac{k^2}{4}\right)=$$
$$=c^{k+1}\left( (\lambda x)^{k+1}-\frac{k(k+1)(k+2)}{24}(\lambda x)^{k-1}+\right.$$
$$\left. +\frac{k(k^2-1)(k^2-4)(5k+12)}{5760}(\lambda x)^{k-3}+\ldots \right).$$
Identifying the first three nonzero coefficients above, we get
$$\frac{2^k}{k+1}=(c\lambda)^{k+1},$$
$$\frac{2^k k}{24}=\frac{c^{k+1}\lambda^{k-1}k(k+1)(k+2)}{24},$$
$$\frac{2^k k(k-1)(k-2)}{384}=\frac{c^{k+1}\lambda^{k-3}k(k^2-1)(k^2-4)(5k+12)}{5760}.$$
Dividing the first equation by the second one we have
$$\lambda^2=k+2$$
and dividing the second equation by the third one we obtain
$$\lambda^2=\frac{5k+12}{15},$$
giving $k=-1.8$, contradiction.

From now on, we assume that either $b/a\neq 1/2$ or $b/a=1/2$ but $k$ is even. Then the argument in \cite{BBKPT} applies. Namely, $S_{a,b}^{k}(x)$ has a zero at $x=0$ and it is not of the form $h(x^d)$ for some $d>1$ and polynomial $h(x)$ by Lemma \ref{thm:BPS}, so $\lambda, \nu \in \mathbb Q$ by Lemma \ref{lemma:aux}. In particular, all the zeros of the polynomial $R_{k+1}(\lambda x+\nu)$ are real. By Lemma \ref{eff:1}, we deduce that $k\leq 5$. So, we have to check the impossibility of the identity
$$S_{a,b}^k(x)=c^{k+1}R_{k+1}(\lambda x+\nu)$$
for some $a,b,c\in \mathbb N$ with $\gcd(a,b)=1$, $\lambda, \nu\in \mathbb Q$ and $k\in \{2,3,4,5\}$. We give the details only for $k=2$ the calculations in other cases are very similar and we leave them to the reader.
For $k=2$ we have
$$S_{a,b}^{2}(x)=\frac{a^2}{3}x^3+\frac{a(2b-a)}{2}x^2+\left(\frac{a^2}{6}-ab+b^2\right)x$$
and
$$c^3R_3(\lambda x+\nu)=c^3\lambda^3 x^3+3c^3\lambda^2(\nu+1)x^2+c^3(2\lambda+6\lambda\nu+3\lambda \nu^2)x+c^3(\nu^3+3\nu^2+2\nu).$$

On comparing the corresponding coefficients we get
\begin{equation} \label{comp:1}
\frac{a^2}{3}=c^3\lambda^3
\end{equation}

\begin{equation} \label{comp:2}
\frac{a(2b-a)}{2}=3c^3\lambda^2 (\nu+1)
\end{equation}

\begin{equation} \label{comp:3}
\frac{a^2}{6}-ab+b^2=c^3\lambda (3\nu^2+6\nu+2)
\end{equation}
and

\begin{equation} \label{comp:4}
0=c^3 \nu (\nu+1)(\nu+2).
\end{equation}

From (\ref{comp:4}) we obtain $\nu \in \{0,-1,-2\}$. Suppose first that $\nu=0$. Then dividing (\ref{comp:1}) by (\ref{comp:2}) and dividing (\ref{comp:2}) by (\ref{comp:3}) we get
$$\lambda=\frac{2a}{2b-a}$$
and
$$\lambda=\frac{a(2b-a)}{\frac{a^2}{2}-3ab+3b^2}.$$
These relations yield
$$a^2-6ab+6b^2=(2b-a)^2$$
and
$$2b(b-a)=0,$$
thus $a=b$ or $b=0$, a contradiction. Now assume that $\nu=-1$. Then from (\ref{comp:2}), $a(2b-a)=0$, we get a contradiction again. Finally, if $\nu=-2$, using the previous argument, and obtaining  $3b^2+2ab=0$ we arrive at a contradiction.

We now assume that that $\deg p(x)=2$, in which case $k+1=2\ell$. By Lemma \ref{thm:BPS}, the decomposition $S_{a,b}^k (x)=c^{\ell}R_{\ell}(p(x))$ is equivalent to
$$S_{a,b}^{k} (x)= \widehat{S}_{(k+1)/2} \left(\left(x+\frac{b}{a} - \frac{1}{2}\right)^2\right)$$
which means that
$$p(x)=\lambda\left(x+\frac{b}{a} - \frac{1}{2}\right)^2 + \nu \quad \text{and} \quad  \widehat{S}_{(k+1)/2}(x)=c^{\ell} R_{\ell}(\lambda x+\nu).$$
If $\ell=2$, we get $k=3$, however this is an easily excludable case. Thus we may assume that $\ell\geq 3$. The polynomial $\widehat{S}_{k}(x)$, vanishes at $x_0=(1/2-b/a)^2$, because
$$\widehat{S}_{m}\left(\left(\frac{1}{2}-\frac{b}{a}\right)^2\right)=S_{a,b}^{2m-1}\left(1-\frac{2b}{a}\right)=$$
$$=\frac{a^{2\ell-1}}{2\ell}\left(B_{2\ell}\left(1-\frac{2b}{a}+\frac{b}{a}\right)-B_{2\ell}\left(\frac{b}{a}\right)\right)=0,$$
where we used the fact that $B_{2\ell}(1-y)=B_{2\ell}(y)$ with $y=b/a$. This polynomial is not of the form $h((x-x_0)^d)$ for some $d>1$ by the argument from the footnote of page 181 on \cite{BBKPT}. Indeed, if it were, by the indecomposability of $\widehat{S}_{\ell}(x)$ (see Lemma \ref{thm:BPS}), we would get that
$$h(x)=\frac{a^{2\ell-1}}{(x-x_0)^m},$$
so
$$\frac{a^{2\ell-1}}{2\ell}\left(B_{2\ell}\left(x+\frac{b}{a}-B_{2\ell}\left(\frac{b}{a}\right)\right)\right)=\frac{a^{2\ell-1}}{2\ell}\left(\left(x+\frac{b}{a}-\frac{1}{2}\right)^2-x_0\right)^{\ell},$$
so
$$B_{2\ell}(x)=(x^2-x_0)^{\ell}+C,$$
where
$$C=\frac{2\ell}{a^{2\ell-1}}B_{2\ell}\left(\frac{b}{a}\right).$$
Taking the derivative in the above formula and using the fact that $k\geq 3$, we conclude that $\pm \sqrt{x_0}$ are double roots of $B_{2\ell}'(x)=2mB_{2\ell-1}(x)$, which is impossible by Lemma \ref{eff:1}, part (iii). Hence, $\lambda, \nu \in \mathbb Q$. It remains to identify coefficients. It is easy to see that the polynomial $\widehat{S}_{\ell}(x)$ and the polynomial $\widetilde{B_{\ell}}(x)$ of \cite{BBKPT} are related via the formula
$$\widehat{S}_{\ell}(x)=\frac{a^{2\ell-1}}{2\ell}\widetilde{B_{\ell}}(x)+D,$$
with
$$D=\frac{a^{2\ell-1}}{2\ell}\left(B_{2\ell}-B_{2\ell}\left(\frac{b}{a}\right)\right).$$
Thus, we get, from a previous calculation (with the change of variable $k+1=2\ell$),
$$\widehat{S}_{\ell}(x)=\frac{a^{2\ell-1}}{2\ell}\left(x^\ell-\frac{2\ell(2\ell-1)}{24}x^{\ell-1}+\right.$$
$$\left.+\frac{2\ell(2\ell-1)(2\ell-2)(2\ell-3)}{384}x^{\ell-2}+\ldots \right).$$
Writing
$$c^{\ell}R_{\ell}(\lambda x+\nu)=c^{\ell}\left((\lambda x+\nu)^{\ell}+\frac{\ell(\ell-1)}{2}(\lambda x+\nu)^{\ell-1}+\right.$$
$$\left. +\frac{\ell(\ell-1)(2\ell-1)}{6}(\lambda x+\nu)^{\ell-2}+\ldots \right),$$
and identifying the corresponding coefficients, we get
$$\frac{a^{2\ell-1}}{2\ell}=c^\ell \lambda^m;$$
$$-\frac{a^{2\ell-1}(2\ell-1)}{24}=c^{\ell} \lambda^{\ell-1}\ell\left(\nu+\frac{\ell-1}{2}\right);$$
and
$$\frac{a^{2\ell-1}(2\ell-1)(2\ell-2)(2\ell-3)}{384}=$$
$$\frac{c^{\ell}\lambda^{\ell-2}\ell(\ell-1)}{2}\left(\nu^2+(\ell-1)\nu+\frac{2\ell-1}{3} \right).$$
Taking ratios of the first two equations and then the next two equations, we get
$$\frac{\lambda}{\nu+(\ell-1)}=-\frac{12}{2\ell-1};$$
$$\frac{\lambda(\nu+(\ell-1)/2}{\nu^2+(\ell-1)\nu+(2\ell-1)/3}=-\frac{4}{2\ell-3}.$$
Dividing the second equation above equation by the first, we get
$$\frac{(\nu+(\ell-1)/2)^2}{(\nu+(\ell-1)/2)^2-(3\ell^2-14\ell+7)/12}=\frac{2\ell-1}{3(2\ell-3)}.$$
This gives
$$\frac{x^2}{x^2-(3\ell^2-14\ell+7)/12}=\frac{2\ell-1}{3(2\ell-3)}$$
with $x=\nu+(\ell-1)/2$, so
$$(\ell-2)x^2=-\frac{(2\ell-1)(3\ell^2-14\ell+7)}{12}.$$
This can be checked to be false for $\ell=2,3,4$ and for $\ell\geq 5$, the left-hand side is positive and the right-hand side is negative.
This shows that indeed it is not possible that $S_{a,b}^k(x)=c^{\ell}R_{\ell}(p(x))$ for some polynomial $p(x)$.
Now we can prove the theorem in its full generality. Assume that
$$r(x)=\alpha x^2+\beta x+\gamma$$
is not a complete square, otherwise $p(x)\sqrt{r(x)}+\delta$ is a polynomial, case which has already been treated. The argument from \cite{BBKPT} applies to say that
$$c^{\ell}R_{\ell}(p(x)\sqrt{r(x)}+\delta)=c^{\ell} r(x)^{\ell/2}p(x)^{\ell}+$$
$$+c^{\ell} r(x)^{(\ell-1)/2}p(x)^{\ell-1}\left(\ell\delta+\frac{\ell(\ell-1)}{2}\right)+\ldots $$
is a polynomial so $\ell$ must be even. Furthermore,
$$\ell\delta+\frac{\ell(\ell-1)}{2}=0,$$
that is $\delta=-\frac{\ell-1}{2}$.
But then
$$R_{\ell}(p(x)\sqrt{r(x)}+\delta)=R_{\ell}\left(p(x)\sqrt{r(x)}-\frac{\ell-1}{2}\right)=\widehat{R}_{\ell/2}(r(x)p(x)^2).$$
Thus, $S_{a,b}^k(x)=\widehat{R}_{\ell/2}(\widetilde{p}(x))$, where $\widetilde{p}(x)=r(x)p(x)^2$. The case $\ell=2$ leads to
$$S_{a,b}^k(x)=cr(x)p(x)^2-\frac{c}{4},$$
so
$$\frac{a^k}{k+1}\left( B_{k+1}\left(x+\frac{b}{a}\right)-B_{k+1}\left(\frac{b}{a}\right)   \right)=cr(x)p(x)^2-\frac{c}{4},$$
so
$$B_{k+1}(x)=\frac{c(k+1)}{a^k}r\left(x-\frac{b}{a}\right)p\left(x-\frac{b}{a}\right)^2+\left(B_{k+1}\left(\frac{b}{a}\right)  -\frac{c(k+1)}{4a^k}\right).$$
By Lemma \ref{eff:1}, we get $k\in \{3,5\}$, however, these cases are excluded by the condition of our lemma.

So, it must be the case that $\ell\geq 4$. We have $S_{a,b}^k(x)=\widehat R_{\ell/2}(\widetilde{p}(x))$. By Lemma \ref{thm:BPS} and the fact that $r(x)$ is not a complete square, it follows that in fact $r(x)$ is a linear polynomial. Say $r(x)=\lambda x+\nu$. Assume first that $b/a=1/2$ and $n$ is even. We then get
$$\widehat{R}_{\ell/2}(r(x))=S_{a,b}^k(x)=\widehat{S}_{(k+1)/2}\left(\left(x+\frac{b}{a}-\frac{1}{2}\right)^2\right),$$
with a linear polynomial $r(x)$, contradicting the indecomposability of $\widehat{R}_{\ell/2}(x)$, see Lemma  \ref{lemma:Rlx}. So either $b/a\neq 1/2$ or $b/a=1/2$ but $k$ is not even. Then $S_{a,b}^{k}(x)$ has $x=0$ as a zero but it is not of the form $h(x^d)$ for any $d>1$ by Lemma \ref{thm:BPS} and $\widehat{R}_{\ell}(x)$ has rational zeros. So, from Lemma \ref{lemma:aux}, $\lambda$ and $\nu$ are rational. In particular, all zeros of $\widehat{R}_{\ell/2}(r(x))$ are real. Thus, $S_{a,b}^{k}$ has only real roots showing that $k\in \{2,3,4,5\}$. Considering these small cases, on comparing the corresponding coefficients we obtain a contradiction.
\end{proof}

We will introduce some notation to recall the finiteness criterion by Bilu and Tichy. In what follows $\alpha$ and $\beta$ are nonzero rational numbers, $\mu,\nu$ and $q$ are positive integers, $p$ is a nonnegative integer and $\nu(X)\in \mathbb Q[X]$ is a nonzero polynomial (which may be constant).

A standard pair of the first kind is $(X^q,\alpha X^{p}\nu(X)^q)$ or switched, $(\alpha X^{p}\nu(X)^q, X^q)$, where $0\leq p<q, (p,q)=1$ and $p+\deg \nu(X)>0$.

A standard pair of the second kind is $(x^2,(\alpha x^2+\beta)\nu(x)^2)$ (or switched).

Denote by $D_{\mu}(x,\delta)$ the $\mu$th Dickson polynomial, defined by the functional equation $D_{\mu}(z+\delta/z,\delta)=z^{\mu}+(\delta/z)^{\mu}$or by the explicit formula
$$D_{\mu}(x,\delta)=\sum_{i=0}^{[\mu/2]}d_{\mu,i}x^{\mu-2i},$$
with
$$d_{\mu,i}=\frac{\mu}{\mu-i}{\binom{\mu-i}{i}}(-\delta)^i.$$
A standard pair of the third kind is $D_{\mu}(x,\alpha^{\nu}),D_{\nu}(x,\alpha^{\mu}$, where $\gcd(\mu,\nu)=1$.

A standard pair of the fourth kind is $\left(\alpha^{-\mu/2} D_{\mu}(x,\alpha), -\beta^{-\nu/2}D_{\nu}D_{\nu}(x,\beta)\right),$ where $\gcd(\mu,\nu)=2$.

A standard pair of the fifth kind is $((\alpha x^2-1)^3, 3x^4-4x^3)$ (or switched).

\begin{lemma} \label{lemma:BT}
Let $Rx),S(x)$ be nonconstant polynomials such that the equation $R(x)=S(y)$ has infinitely many solutions in rational integers $x,y$. Then $R(x)=\phi(f(\kappa(x)))$ and $S(x)=\phi(g(\lambda(x)))$ where $\kappa(x),\lambda(x)\in \mathbb Q[x]$ are linear polynomials, $\phi(x)\in \mathbb Q[x]$, and $(f(x),g(x))$ is a standard pair.
\end{lemma}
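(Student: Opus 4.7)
The statement is the Bilu--Tichy criterion, so the plan is to follow the argument of Bilu and Tichy \cite{BiluTichy}, which couples Siegel's theorem on integral points on algebraic curves with a deep classification of polynomial pairs whose associated plane curve is of a very restrictive geometric type.

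First I would factor out the largest possible common outer composition factor, writing $R=\phi\circ R_1$ and $S=\phi\circ S_1$ with $\phi\in\mathbb{Q}[x]$ of maximal degree so that no further common outer factor can be extracted from the pair $(R_1,S_1)$. The infinitely many integer solutions of $R(x)=S(y)$ project to infinitely many integer solutions of $R_1(x)=S_1(y)$, so some absolutely irreducible component of the affine curve $R_1(x)-S_1(y)=0$ carries infinitely many integer points. Siegel's theorem on $S$-integral points then forces that component to have geometric genus $0$ and at most two places at infinity. This geometric dichotomy is the bridge from the arithmetic hypothesis to a purely algebraic classification problem.

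The core of the proof is to translate these two conditions into constraints on the monodromy groups of $R_1$ and $S_1$ and their ramification behaviour over $\infty$, via the Riemann--Hurwitz formula and the Riemann Existence Theorem. This reduces the task to classifying pairs of transitive permutation groups with prescribed cycle structure at one distinguished point. One then invokes Ritt's theorems on polynomial decomposition, together with the Fried--Feit--M\"uller classification of monodromy groups of indecomposable polynomials, to enumerate all possibilities. The output is precisely the five families of standard pairs: the first-kind pair coming from Ritt's second theorem, the hyperelliptic second-kind pair, the two Dickson families generated by the functional equation $D_{\mu}(z+\delta/z,\delta)=z^{\mu}+(\delta/z)^{\mu}$, and the sporadic exceptional pair $((\alpha x^2-1)^3,\,3x^4-4x^3)$. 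A separate descent step, using Galois conjugation together with the uniqueness portion of Ritt's decomposition theorems, ensures that $\kappa(x)$, $\lambda(x)$ and $\phi(x)$ can all be taken to have rational coefficients.

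The main obstacle is this last classification step: it is the genuinely deep part of the theorem and relies on substantial permutation-group machinery far beyond the scope of the present paper. For this reason I would not attempt an independent proof here and would simply invoke the Bilu--Tichy theorem as a black box from \cite{BiluTichy}; the statement of the lemma is exactly their Theorem~1.
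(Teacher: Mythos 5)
Your proposal is correct and matches the paper exactly: the paper states this lemma purely as a recollection of the Bilu--Tichy finiteness criterion and offers no proof beyond the citation to \cite{BiluTichy}, which is precisely what you do after your (accurate) sketch of Siegel's theorem plus the Ritt-type classification underlying their result. Invoking it as a black box is the intended treatment here.
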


We need the analogs of Lemmata 5.2 and 5.3 in \cite{BBKPT}. Both of them follow immediately from the analogous results in \cite{BBKPT}, so their proofs are omitted.

\begin{lemma} \label{lemma:npower}
None of the polynomials $S_{a,b}^n(a_1x+a_0)$ or $c^mR_m(b_1x+b_0)$ or  is of the form $e_1x^q+e_0$ with $q\geq 3$.
\end{lemma}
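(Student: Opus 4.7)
The plan is to argue by contradiction in both cases, showing that the hypothesized representation as a binomial $e_1 x^q+e_0$ with $q\ge 3$ forces a polynomial closely related to a Bernoulli polynomial (respectively to $R_m$) to have a zero of very high multiplicity, which is ruled out by simplicity-of-zero statements already available. In both cases I would first compare degrees (forcing $q$ to equal the degree of the left-hand side), then apply a linear change of variable to clear away $a_1,a_0$ (respectively $b_1,b_0$), and finally differentiate once to expose the contradiction.

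For $S_{a,b}^n(a_1x+a_0)=e_1x^q+e_0$, degree considerations give $q=n+1\ge 3$, so $n\ge 2$. After substituting $x=(y-a_0)/a_1$ and using \eqref{eq:mainI}, the identity becomes
$$B_{n+1}(u)=\alpha(u-\beta)^{n+1}+\gamma$$
for suitable constants $\alpha,\beta,\gamma\in\mathbb{C}$, where $u=y+b/a$. Differentiating once and using $B_{n+1}'=(n+1)B_n$ yields $B_n(u)=\alpha'(u-\beta)^n$, so $\beta$ is a zero of $B_n$ of multiplicity $n\ge 2$. This contradicts part (iii) of Lemma \ref{eff:1}, which asserts that the zeros of any Bernoulli polynomial are all simple.

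For $c^mR_m(b_1x+b_0)=e_1x^q+e_0$, degree considerations give $q=m\ge 3$. After the linear substitution $x=(z-b_0)/b_1$ the identity becomes
$$R_m(z)=\tilde\alpha(z-z_0)^m+\tilde\gamma$$
for suitable constants $\tilde\alpha,\tilde\gamma,z_0$. Differentiating gives that $R_m'$ has $z_0$ as a zero of multiplicity $m-1\ge 2$. However, $R_m(z)=z(z+1)\cdots(z+m-1)$ has $m$ distinct real simple zeros, so by Rolle's theorem $R_m'$ has $m-1$ distinct real zeros interlacing those of $R_m$; since $\deg R_m'=m-1$, these account for all the zeros of $R_m'$ counted with multiplicity, so every zero of $R_m'$ is simple. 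This gives the desired contradiction.

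There is no genuine obstacle: both halves are short reductions once one sees that a single differentiation of $(\cdot)^q$ leaves a $(q-1)$-fold zero behind, which must then be forbidden by the simplicity statements in Lemma \ref{eff:1}(iii) and by Rolle's theorem applied to $R_m$. The only mild care needed is the initial step of absorbing the linear change of variable, so that one is in fact making a statement about the standard polynomials $B_n$ and $R_m$ themselves.
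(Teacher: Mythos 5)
Your argument is correct, but it is worth noting how it relates to the paper: the paper gives no proof of this lemma at all, stating only that it (together with Lemma \ref{lemma:ndickson}) ``follows immediately from the analogous results in \cite{BBKPT}'' (Lemmata 5.2 and 5.3 there), so the official route is a reduction to the corresponding statements for $\widetilde{B}_n$ and $R_m$ in that earlier paper. What you supply instead is a short self-contained proof: after matching degrees and normalizing away the linear substitution, the identity $B_{n+1}(u)=\alpha(u-\beta)^{n+1}+\gamma$ differentiates (via $B_{n+1}'=(n+1)B_n$) to give $B_n(u)=\alpha(u-\beta)^n$, so $\beta$ would be an $n$-fold zero of $B_n$ with $n\ge 2$, contradicting Lemma \ref{eff:1}(iii); and $R_m(z)=\tilde\alpha(z-z_0)^m+\tilde\gamma$ would force $R_m'$ to have a zero of multiplicity $m-1\ge 2$, impossible since the $m$ distinct real zeros of $R_m$ and Rolle's theorem force all $m-1$ zeros of $R_m'$ to be simple. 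Both halves check out (the implicit nondegeneracy $a_1,b_1,e_1\ne 0$ is harmless, as in the Bilu--Tichy setting the inner polynomial is genuinely linear and the right-hand side genuinely of degree $q$), and your version has the advantage of using only ingredients already present in this paper --- Lemma \ref{eff:1}(iii) and elementary calculus --- rather than importing the statements from \cite{BBKPT}; the paper's citation route is shorter but less transparent.
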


\begin{lemma} \label{lemma:ndickson}
The polynomial $S_{a,b}^{n}(a_1 x+a_0)$ is not of the form $e_1D_{t}(x,\delta)+e_0$, where $D_t(x,\delta)$ is the Dickson polynomial with $t>4$ and $\delta\in \mathbb Q\setminus \{0\}$.
\end{lemma}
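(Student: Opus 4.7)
The plan is to reduce the claim directly to the analogous assertion for the classical power sums $S_n(x)$ (i.e.\ with $a=1$, $b=0$), which is Lemma~5.3 in \cite{BBKPT}. The key observation is formula \eqref{eq:mainI}, which expresses $S_{a,b}^n$ in terms of a Bernoulli polynomial and therefore relates it to the classical $S_n$ by an affine change of variables and an additive constant. Explicitly, comparing
$$S_{a,b}^n(x) = \frac{a^n}{n+1}\left(B_{n+1}\!\left(x+\tfrac{b}{a}\right)-B_{n+1}\!\left(\tfrac{b}{a}\right)\right)$$
with $a^n S_n\!\left(x+\tfrac{b}{a}\right)=\tfrac{a^n}{n+1}\left(B_{n+1}(x+\tfrac{b}{a})-B_{n+1}(0)\right)$, I get
$$S_{a,b}^n(x)\;=\;a^n\,S_n\!\left(x+\tfrac{b}{a}\right)+\kappa,$$
where $\kappa=\tfrac{a^n}{n+1}\bigl(B_{n+1}(0)-B_{n+1}(b/a)\bigr)\in\mathbb{Q}$ is a constant.

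Now I would argue by contradiction. Suppose $S_{a,b}^{n}(a_1 x+a_0)=e_1 D_t(x,\delta)+e_0$ with $t>4$ and $\delta\in\mathbb{Q}\setminus\{0\}$. Substituting the identity above and writing $a_1' = a_1$, $a_0'=a_0+b/a$, I obtain
$$a^n\,S_n(a_1' x+a_0')\;=\;e_1\,D_t(x,\delta)+(e_0-\kappa).$$
Dividing through by $a^n$, this exhibits $S_n(a_1' x+a_0')$ in the form $e_1'' D_t(x,\delta)+e_0''$ with $e_1''=e_1/a^n$, $e_0''=(e_0-\kappa)/a^n$. Since $a_1',a_0'\in\mathbb{Q}$ and $\delta\in\mathbb{Q}\setminus\{0\}$, this is precisely the situation excluded by Lemma~5.3 of \cite{BBKPT} under $t>4$.

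The only step requiring any care is verifying that the reduction preserves the hypotheses of the cited lemma, namely that the shifted/scaled argument still has rational coefficients and that $\delta$ is unchanged (in particular, still a nonzero rational). Both are immediate from the form of the substitution and the fact that $b/a\in\mathbb{Q}$. I do not foresee a genuine obstacle: the lemma is essentially an invariance-under-affine-normalization statement that transports the Bernoulli-side result of \cite{BBKPT} to the more general power sums $S_{a,b}^n$, exactly as remarked in the paper just before the statement. This is why the authors omit the proof.
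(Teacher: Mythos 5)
Your reduction is correct and is exactly the argument the paper has in mind: the authors omit the proof, stating only that the lemma ``follows immediately from the analogous results in \cite{BBKPT}'', and your identity $S_{a,b}^n(x)=a^nS_n\!\left(x+\tfrac{b}{a}\right)+\kappa$ makes that immediate transfer to Lemma~5.3 of \cite{BBKPT} explicit, with the affine shift absorbed into $a_0$ and the constant into $e_0$. No gap; this matches the paper's approach.
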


\section{Proofs of the Theorems}

\begin{proof}[Proof of  Theorem \ref{thm:eff1}]

If $b=0$ we essentialy obtain the original Sch\"affer equation so in the sequel we assume $b\neq 0$.

For $k\in \{2,4\}$ or $k\geq 6$, our theorem is an easy consequence of (\ref{eq:mainI}), Lemmata \ref{eff:1} and \ref{lem:hyper}. Suppose that $k=1$ and consider the equation
$$S_{a,b}^1 (x) =\frac{1}{2}x(ax+2b-a)=y^{\ell}.$$

Since $(a,b)=1$, one can see that the quadratic polynomial on the left hand side has two simple zeros apart from the case $a=2,b=1$. For $k=3$ and $k=5$ the discriminant of $S_{a,b}^k (x)$
is
$$\frac{1}{256}a^6b^4(a-b)^4(a-2b)^2(a^2+4ab-4b^2)$$
and
$$\frac{1}{967458816}a^{20}b^4(a-b)^4(a-2b)^2(a^2+3ab-3b^2)^4(a^2-6ab+6b^2)^2\times$$
$$\times (a^2+4ab-4b^2)(a^2+2ab-2b^2)^2(3a^4+12a^3b+4a^2b^2-32ab^3+16b^4), $$
respectively.

We have two critical cases (i. e. $\frac{a}{b}$  is a rational zero of discriminants,) $a=b=1$ and $a=2,b=1$. In the first case
$$S_{1,1}^{k}(x)=S_{k}(x+1),$$
where $S_k(x)$ denotes the usual Sh\"affer's sum of $k$th powers. If $a=2, b=1$ then we get
$$S_{2,1}^{3}(x)=2x^4-x^2\,\,\mbox{and}\,\, S_{2,1}^{5}(x)=\frac{1}{3}x^2(16x^4-20x^2+7),$$
and these polynomials have two and four simple zeros, respectively.

\end{proof}

\begin{proof}[Proof of Theorem \ref{thm:eff2}]
First we consider our equation
$$
S_{a,b}^{k}(x)=R_{c}^{\ell}(y)
$$
in integers $x$ and $y$, where $\ell\geq 2$ and $k\in\{1,3\}$. Formulas (\ref{eq:mainI}) and (\ref{eq:obser})
give
$$ R_{c}^{\ell}(y)=c^{\ell}R_{\ell}\left(\frac{y}{c}\right)=\frac{1}{2}x(ax+2b-a),$$
$$8ac^{\ell}R_{\ell}\left(\frac{y}{c}\right)+(2b-a)^2=(2ax+2b-a)^2,$$
and
$$ R_{c}^{\ell}(y)=c^{\ell}R_{\ell}\left(\frac{y}{c}\right)=\frac{1}{4}x(ax+2b-a)\times$$
$$\times (a^2x^2+(2ba-a^2)x+2b^2-2ab),$$
$$4ac^{\ell}R_{\ell}\left(\frac{y}{c}\right)=X(X+2b^2-2ab)=(X+(b^2-ab))^2-(b^2-ab)^2,$$
where $X=a^2x^2+(2ab-b^2)x$, respectively,  and Lemmas \ref{Rl+q} and \ref{lem:hyper} complete the proof for $(k,\ell)\notin \{(1,2),(3,2),(1,4),(3,4)\}$.

Now, if $\ell\in\{2,4\}$ we have
$S_{a,b}^{k}(x)=y(y+c), 4S_{a,b}^{k}(x)+c^2=(2y+c)^2$
and
$$S_{a,b}^{k}(x)=y(y+c)(y+2c)(y+3c)=(y^2+3cy+c^2)^2-c^4,$$
respectively, and our result is proved by  (\ref{eq:mainI}) and Lemmas \ref{eff:1} and \ref{lem:hyper} for
$$(k,\ell)\notin \{(1,2),(3,2),(1,4),(3,4),(5,2),(5,4)\}.$$
\end{proof}

\begin{proof}[Proof of Theorem \ref{thm:eff3}]
For $(k,\ell)=(1,4)$ we get
$$\frac{1}{2}x(ax+2b-a)=y(y+c)(y+2c)(y+3c),$$
and
$$8ac^4\frac{y}{c}\left(\frac{y}{c}+1  \right)\left(\frac{y}{c} +2  \right)\left( \frac{y}{c}+3 \right)=(2ax+2b-a)^2-(2b-a)^2.$$
Since $\frac{(2b-a)^2}{8ac^4}$ is non-negative, we cannot guarantee three simple zeros when this fraction is 1 (cf. Lemma \ref{Rl+q}). If
 $(2b-a)^2=8ac^4$, we have $a=2$. Indeed, by the parities $a\neq 1$. Denote by $p$ an arbitrary prime divisor of $a$, so $p|a$ and thus $p|2b$, and $p=2$. Now, if $a=2^{\alpha}$, where $\alpha\geq 2$, then $\mbox{ord}_2(2b-a)=1$ which is a contradiction, so $a=2$ and  $(b-1)^2=4c^4$. For $(k,\ell)=(3,4)$ we can apply a very similar argument, and here
$$\frac{(b^2-ab)^2}{4ac^4}=1,$$
and this yields $a=1,\left(b(b-1)\right)^2=4c^4$.

For $(k,\ell)=(3,2),(5,2)$ and $(5,4)$ we follow the same idea, and give the details only for case $(k,\ell=(3,2)$.
Consider the equation
$$S_{a,b}^{3}(x)=\frac{a^3}{4}\left(B_4\left(x+\frac{b}{a}\right)-B_4\left(\frac{b}{a}\right)\right)=R_{c}^{2}(y)=y(y+c),$$
and
$$a^3\left(B_4\left(x+\frac{b}{a}\right)-B_4\left(\frac{b}{a}\right)+\frac{c^2}{a^3}\right)=(2y+c)^2.$$
Finally, Lemma \ref{eff:1} completes the proof.
\end{proof}

\begin{proof}[Proof of Theorem \ref{thm:main}] We follow Section 5.3 of \cite{BBKPT}. In view of the small cases treated and
the fact that we have proved the analog of Theorem 4.4 in \cite{BBKPT}, the argument
from Page 184 shows that we may assume that $(f(x), g(x))$ do not form
a pair of second or fifth kind. If it is of the first kind, we get the same
contradiction based on Lemma  \ref{lemma:npower}, and if it is of the fourth kind, we get
again the contradiction based on Lemma \ref{lemma:ndickson}. So, we only need to revisit the
argument in \cite{BBKPT} for the pairs of the third kind. For this, we just notice that,
with the notations from there, all coefficients $s_i$ get multiplied by $a^n = a_2$
(except for the last one which also gets shifted but hopefully we shall not
get to it), and all the coefficients $r_j$ get multiplied by $c_m$. So, the analogs of
(26)-(29) in \cite{BBKPT} become
$$s_3=\frac{b_{1}^{3}a^2}{3}=e_1,$$
$$s_1=-\frac{b_1a^2}{24}=-3e_1\alpha^m,$$
$$r_m=a_{1}^{m}c^m=e_1,$$
$$r_{m-2}=-a_{1}^{m-2}c^m\frac{m(m-1)(m+1)}{24}=-e_1m\alpha^3.$$
So we get
$$\alpha^m=\frac{b_{1}^{-2}}{24}, \alpha^3=a_{1}^{-2}\frac{m^2-1}{24}, c^ma_{1}^{m}=\frac{a^2b_{1}^{3}}{3}.$$
Hence,
$$\frac{b_{1}^{-6}}{24^3}=a_{1}^{-2m}\left(\frac{m^2-1}{24}\right)^m=(a^2c^{-m})^{-2}9b_{1}^{-6}\left(\frac{m^2-1}{24}\right)^m,$$
giving
$$\frac{1}{2^9 3^5}=\left(\frac{c^m}{a^2}\right)^2\left(\frac{m^2-1}{24}\right)^m.$$
If $m$ is even, the number on the right above is a square of a rational number, whereas the number on the left is not. So, $m$ is odd. Now, we get
$$\frac{1}{2^9 3^5}=\left(\frac{c^m}{a^2}\right)^2\left(\frac{m^2-1}{24}\right)^{m-1}\left(\frac{m^2-1}{24}\right),$$
or
$$\frac{1}{m^2-1}=2^6 3^4\left(\frac{c^m}{a^2}\right)^2\left(\frac{m^2-1}{24}\right)^{m-1},$$
and the right-hand side above is a square of a rational number, therefore so
is the left-hand side, so $m^2-1$ is a square, contradiction.
The theorem is proved.
\end{proof}

\section*{Acknowledgements}

The research of the authors was supported in part by the E\"otv\"os Lor\'and Research Network (ELKH) and by the NKFIH grants ANN130909 and K128088.

\bibliographystyle{amsplain}

\end{document}